\subjclass[2020]{14J60, 14H60}
\keywords{Ulrich´s bundles, syzygies bundles, projective varieties}
\newtheorem{Theorem}{Theorem}[section]
\newtheorem{Lemma}[Theorem]{Lemma}
\newtheorem{Definition}[Theorem]{Definition}
\newtheorem{Proposition}[Theorem]{Proposition}
\newtheorem{Corollary}[Theorem]{Corollary}
\newtheorem{Remark}[Theorem]{Remark}
\newtheorem{Example}[Theorem]{Example}
\title{On the Ulrichness of twisted syzygies and dual syzygies bundles} 
\author{ Hugo Torres-L\'opez and Alexis G. Zamora}
\address{Secihti-U. A. Matem\'aticas, U. Aut\'onoma de
Zacatecas
\newline  Calzada Solidaridad entronque Paseo a la
Bufa, \newline C.P. 98000, Zacatecas, Zac. M\'exico.}
\email{hugo@cimat.mx}
\address{U. A. Matem\'aticas, U. Aut\'onoma de
Zacatecas
\newline  Calzada Solidaridad entronque Paseo a la
Bufa, \newline C.P. 98000, Zacatecas, Zac. M\'exico.}
\email{alexiszamora@uaz.edu.mx}
\begin{document}

\maketitle

\begin{abstract}Given a projective variety $X$ and a very ample line bundle $\mathcal{L}$ on $X$, we classify for which $X$ and $\mathcal{L}$ the twisted syzygies and twisted dual syzygies bundles are Ulrich with respect to the polarizations $\mathcal{L}^a$. We obtain some partial results when considering an arbitrary polarization $H$.\end{abstract}

  \section{Introduction}

 Our starting point is a complex projective variety $X\subseteq \mathbb{P}^n$ that we always assume non-degenerate, non-singular, and irreducible. We simply say that $X\subseteq \mathbb{P}^n$ is a projective variety.

A cursive letter, like $\mathcal{L}$, will denote an invertible sheaf on a projective variety $X$ and $L$ will be its associated divisor, that is: $\mathcal{L}=\mathcal{O}_X(L)$.

We write $H^i(\mathcal{L})$ or $H^i(L)$ instead of $H^i(X,\mathcal{L})$ and analogously for the dimension $h^i(\mathcal{L})$.

If $\mathcal{L}:= i^* \mathcal{O}_{\mathbb{P}^n}(1)$, then $\mathcal{L}$ is very ample and $\mathbb{P}^n=\mathbb{P}(V)$ for some subspace $V\subseteq H^0(\mathcal{L})$, that we call an embedding subspace. Thus, if $\dim X=d$, then $\deg X=L^d$.

We have an exact sequence:

\begin{equation}\label{definition of M}
    0\to M_{\mathcal{L},V}\to V\otimes \mathcal{O}_X\to \mathcal{L}\to 0,\end{equation}
and its dual:
\begin{equation}\label{definition of M dual} 0\to \mathcal{L}^{-1} \to V\otimes \mathcal{O}_X\to M^{\vee}_{\mathcal{L},V}\to 0.\end{equation}

The locally free sheaf of rank $n$ $M_{\mathcal{L},V}$ is called the syzygy bundle of $\mathcal{L}$ with respect to $V$.

Recall from \cite{Beauville1} the following:

\begin{Definition}\label{definition of Ulrich} A locally free sheaf $\mathcal{E}$ on a $d-$dimensional projective variety $X$ is an $H-$Ulrich bundle if

$$H^i(\mathcal{E}(-pH))=0, \quad \text{for all } i \text{ and } 1\le p\le d. $$
\end{Definition}

In \cite{BUlrich}, B.
Ulrich initiated the study of Ulrich bundles in the context of maximally generated Cohen–Macaulay modules. He was studying conditions under which a local Cohen–Macaulay ring is Gorenstein. Through Beilinson monads and relative resolutions, Eisenbud and  Schreyer, showed that certain ACM sheaves admit completely linear resolutions (see \cite{Eisenbud}). This viewpoint established the bridge between geometry (coherent sheaves on projective varieties) and algebra (maximal Cohen–Macaulay modules with linear resolution).

Other relevant papers on the geometric properties of Ulrich bundles are \cite{BeauvilleHypersurface}, \cite{HartshorneCasanellas} and \cite{Coskun et al}.

There are beautiful introductions to Ulrich bundles, such as \cite{Beauville1} and \cite{Coskun Emre}, and even a textbook \cite{CostaMirog}.

The main goal of this article is to prove the following:

 \begin{Theorem}\label{Theorem Main} Let $X$ be a projective variety, $\mathcal{L}$ a very ample line bundle and $V\subseteq H^0(\mathcal{L})$ be an embedding subspace, $k\in \mathbb{Z}$, and $a\in \mathbb{N}$.
 
 \begin{itemize} 
 
 \item[i)] $M^{\vee}_{\mathcal{L}, V}\otimes \mathcal{L}^{k+1}$ is $\mathcal{L}^a-$Ulrich if and only if, either:
 
 \begin{itemize}
     \item[(1)]  $X=\mathbb{P}^1$, $L=\mathcal{O}_{\mathbb{P}^1}(2)$, $V=H^0(\mathcal{O}_{\mathbb{P}^1}(2))$ and $a=k+2$.
           \item[(2)] $X=\mathbb{P}^1$, $L=\mathcal{O}_{\mathbb{P}^1}(1)$, $V=H^0(\mathcal{O}_{\mathbb{P}^1}(1))$ and $a=k+3.$
    \item[(3)]$k=0$, $a=2$ and $X=\mathbb{P}^2$, that is for the sheaf $T\mathbb{P}^2$ with respect to $\mathcal{O}_{\mathbb{P}^2}(2)$.
 \end{itemize}
 \item[ii)] $M_{\mathcal{L}, V}^{k-1}\otimes \mathcal{L}$ is $\mathcal{L}^a-$Ulrich if and only if $a=k-1$ and  $X$ is a normal rational curve.
 \end{itemize}
\end{Theorem}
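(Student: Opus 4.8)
\emph{Step 1: cohomological translation and the admissible twists.} Write $d=\dim X$, $\delta=L^{d}$, $N=\operatorname{rk}M_{\mathcal L,V}=\dim V-1$. By Definition~\ref{definition of Ulrich}, $M^{\vee}_{\mathcal L,V}\otimes\mathcal L^{k+1}$ is $\mathcal L^{a}$-Ulrich exactly when $H^{i}\!\bigl(M^{\vee}_{\mathcal L,V}\otimes\mathcal L^{\,k+1-pa}\bigr)=0$ for all $i$ and all $1\le p\le d$, and similarly for the bundle in (ii). Twisting \eqref{definition of M} and \eqref{definition of M dual} by $\mathcal L^{m}$ and running the long exact cohomology sequences one obtains, for every $m\in\mathbb Z$: $H^{\bullet}(M_{\mathcal L,V}\otimes\mathcal L^{m})=0$ in all degrees iff the maps $V\otimes H^{\bullet}(\mathcal L^{m})\to H^{\bullet}(\mathcal L^{m+1})$ (multiplication of sections in degree $0$, cup product with $V\subseteq H^{0}(\mathcal L)$ in higher degrees) are isomorphisms in all degrees; dually, $H^{\bullet}(M^{\vee}_{\mathcal L,V}\otimes\mathcal L^{m})=0$ in all degrees iff the adjoint maps $H^{\bullet}(\mathcal L^{m-1})\to V^{\vee}\otimes H^{\bullet}(\mathcal L^{m})$ are isomorphisms in all degrees. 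Two positivity facts then cut down the twists that can occur. Since $M^{\vee}_{\mathcal L,V}$ is globally generated (a quotient of a trivial bundle by \eqref{definition of M dual}), $M^{\vee}_{\mathcal L,V}\otimes\mathcal L^{m}$ has a section for every $m\ge0$, so in (i) \emph{each} twist $k+1-pa$ must be negative, forcing $a\ge k+2$. Dually, the Koszul relations $s_{1}\otimes(s_{2}t)-s_{2}\otimes(s_{1}t)$, $s_{i}\in V$, $t\in H^{0}(\mathcal L^{m-1})$, are nonzero elements of $\ker\!\bigl(V\otimes H^{0}(\mathcal L^{m})\to H^{0}(\mathcal L^{m+1})\bigr)$ for every $m\ge1$, and $H^{1}(M_{\mathcal L,V}\otimes\mathcal L^{-1})\ne0$ always; in (ii) this leaves a single admissible twist, which forces $a=k-1$ and, via the degree-$0$ isomorphism, $V=H^{0}(\mathcal L)$.

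\emph{Step 2: reduction to curves.} Fix $a\ge1$. A general complete intersection $C=X\cap Y_{1}\cap\dots\cap Y_{d-1}$ with $Y_{j}\in|\mathcal L^{a}|$ is a smooth irreducible curve spanning $\mathbb P^{n}$, with $\deg_{\mathcal L}C=a^{d-1}\delta$; iterating the standard hyperplane-section argument — which invokes precisely the vanishings of Definition~\ref{definition of Ulrich} — shows that an $\mathcal L^{a}$-Ulrich bundle on $X$ restricts to an $\mathcal L^{a}|_{C}$-Ulrich bundle on $C$. When $a\ge2$, restriction realises $V$ as an embedding subspace for $C$ with $M_{\mathcal L,V}|_{C}=M_{\mathcal L|_{C},V}$, so the restricted bundle is again of the same shape; when $a=1$ one uses linear sections and keeps track of the $d-1$ trivial sub-bundles that appear, each of which must itself be $\mathcal L|_{C}$-Ulrich — an immediate obstruction. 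In all cases we are reduced to $\dim X=1$, subject to the extra numerical datum $\deg_{\mathcal L}C=a^{d-1}\delta$.

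\emph{Step 3: the curve case.} Let $C$ be smooth of genus $g$, embedded by $V$ with $e=\deg\mathcal L$ and $N=\dim V-1\le e$, with equality iff $C$ is the rational normal curve of degree $N$ and $V=H^{0}(\mathcal L)$. On a curve only $H^{0}$ and $H^{1}$ occur, and $H^{1}$ vanishes once $\chi=0$ and $H^{0}$ vanish, so the criteria of Step~1 become statements about multiplication maps. For (i), with $m=k+1-a$: positivity gives $m\le-1$; the Koszul obstruction, applied through Serre duality to $H^{\bullet}(M_{\mathcal L,V}\otimes\mathcal L^{-m}\otimes\omega_{C})=0$, kills $m\le-3$; and for $m=-1$, resp.\ $m=-2$, the isomorphism condition takes the dual form $V\otimes H^{0}(\omega_{C}\otimes\mathcal L^{-m})\xrightarrow{\ \sim\ }H^{0}(\omega_{C}\otimes\mathcal L^{1-m})$ (both sheaves being non-special), which forces $N(g+e-1)=e$, resp.\ $H^{0}(\omega_{C}\otimes\mathcal L)=0$, and hence, together with $N\le e$, the sole solutions $(g,e,N)=(0,2,2)$ and $(0,1,1)$ — the conic and the line — giving $a=k+2$ (case (1)) and $a=k+3$ (case (2)). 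For (ii) the same run of arguments forces the twist to be $0$, hence $a=k-1$ and $V=H^{0}(\mathcal L)$, and the surviving isomorphism $V\otimes H^{1}(\mathcal O_{C})\xrightarrow{\ \sim\ }H^{1}(\mathcal L)$ forces $g=0$; conversely every normal rational curve is seen at once to work.

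\emph{Step 4: climbing back and the converse.} For $d\ge2$ the datum of Step~2 reads $a^{d-1}\delta\in\{1,2\}$, with the curve section a line or a conic as dictated by $m$; this is a finite list of quadruples $(X,\mathcal L,k,a)$: the linearly embedded $\mathbb P^{d}$ with $(k,a)=(-2,1)$, a quadric hypersurface $Q^{d}$ with $(k,a)=(-1,1)$, and $\mathbb P^{2}$ linearly embedded with $(k,a)=(0,2)$. Testing each against the normalisation $h^{0}(\mathcal E)=(\operatorname{rk}\mathcal E)\,a^{d}\delta$ forced by the Ulrich property (or directly against Step~1) eliminates the first two, while the last is precisely the sheaf $M^{\vee}_{\mathcal L,V}\otimes\mathcal L=T\mathbb P^{2}$, which is $\mathcal O_{\mathbb P^{2}}(2)$-Ulrich — case (3); the higher-dimensional cases of (ii) are excluded analogously, leaving only the normal rational curves themselves. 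Finally, a direct cohomology computation confirms that the three families in (i) and the normal rational curves in (ii) are genuinely Ulrich, which is the converse. The delicate points are Step~3 — converting control of the maps $V\otimes H^{0}(\omega_{C}\otimes\mathcal L^{j})\to H^{0}(\omega_{C}\otimes\mathcal L^{j+1})$ into the exact classification — and, in Step~4, the finite but fussy verification that $\mathbb P^{2}$ is the only case that survives in dimension $\ge2$.
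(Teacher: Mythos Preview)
Your strategy---translate the Ulrich condition into isomorphisms of multiplication maps, cut $X$ all the way down to a curve section, classify there via Koszul/Serre-duality arguments, and climb back using the numerics $a^{d-1}\delta$ and $h^{0}=r\,a^{d}\delta$---is genuinely different from the paper's, which treats surfaces separately (by Riemann--Roch for~(i) and by Reider's theorem for~(ii)) and then inducts one dimension at a time. For part~(i) your route is sound and has the pleasant feature of avoiding Reider altogether; the ``immediate obstruction'' for $a=1$ in Step~2 and the $h^{0}$-test in Step~4 are redundant with one another, but each is correct.

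For part~(ii), however, Step~4 has a real gap. When you say ``the higher-dimensional cases of~(ii) are excluded analogously,'' the analogy breaks: in~(ii) the curve section must be a rational normal curve of degree $e=N$, so the constraint coming from Step~2 is $N=a^{d-1}\delta$, not $a^{d-1}\delta\in\{1,2\}$. For $a\ge2$ and $d\ge2$ this, together with $\delta\ge N-d+1$, still admits $X=\mathbb P^{2}$, $\mathcal L=\mathcal O(1)$, $(k,a)=(3,2)$, i.e.\ the bundle $M_{\mathcal L,V}\otimes\mathcal L^{2}=\Omega_{\mathbb P^{2}}(3)$ with polarisation $\mathcal O(2)$. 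This candidate passes your $h^{0}$-test ($h^{0}(\Omega(3))=8=2\cdot2^{2}$) because it genuinely \emph{is} Ulrich: it is precisely the Ulrich dual $(T\mathbb P^{2})^{\vee}\otimes\omega_{\mathbb P^{2}}((d{+}1)H)=\Omega(3)$ of the case you accept in~(i)(3), and one checks directly that $H^{\bullet}(\Omega_{\mathbb P^{2}}(1))=H^{\bullet}(\Omega_{\mathbb P^{2}}(-1))=0$. So the statement of~(ii) is false as written, and no argument can exclude this case; the paper's own proof of Proposition~\ref{Proposition M} has the matching error---the inequality $((2a{+}1{-}k)L)^{2}\ge(a{+}2)^{2}L^{2}$ invoked before Reider fails when $k=a{+}1$, and $\mathbb P^{2}$ slips through there as well. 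A smaller issue: in Step~1 you assert that Koszul plus $H^{1}(M\otimes\mathcal L^{-1})\ne0$ already force $a=k-1$ in~(ii), but those two obstructions leave all twists $m\le-2$ open; you only close this on curves in Step~3, via Serre duality and global generation of $M^{\vee}$.
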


Recently, several authors have studied the Ulrichness of bundles similar to those studied in this paper. In \cite{Montero}, the authors proved that the only projective manifolds whose tangent bundle is Ulrich are the twisted cubic and the Veronese surface. Moreover, they prove that the cotangent bundle is never Ulrich. Later, in \cite{AngeloTwistingTangent}, the authors classify smooth varieties $X\subset \mathbb{P}^n$ such that the twisting tangent bundle $TX(k)$ is an Ulrich bundle for some $k\in \mathbb{Z}.$ In \cite{Angelo} and \cite{Lopez-Collino}, the author characterizes smooth irreducible varieties with Ulrich twisted normal or conormal bundles. These papers usually studied Ulrichness with respect to a hyperplane section of a given embedding. For instance, in \cite{Lopez-Collino}, Theorem \ref{Theorem Main} was proved in the case $a=1$.

In section \ref{Section some properties} we collect some properties of syzygies and Ulrich bundles that will be used in the sequel. In sections \ref{Item i)} and \ref{Item ii)} we prove, respectively, items i) and ii) of $\ref{Theorem Main}$. Finally, section \ref{section arbitrary H} gives some partial results on the Ulrichness of syzygy bundles with respect to arbitrary polarizations.

\section{Some properties of syzygies and Ulrich bundles}\label{Section some properties}
In the projective space $\mathbb{P}^n$ we have the Euler exact sequence:

$$ 0\to \mathcal{O}_{\mathbb{P}^n} \to \mathcal{O}_{\mathbb{P}^n}(1)^{\oplus n+1} \to T\mathbb{P}^n \to 0$$
with $T\mathbb{P}^n$ the tangent sheaf (see, for instance, \cite{HartshorneAlgebraicGeometry}, Theorem II.8.13). Twisting with $\mathcal{O}_X(k)$ and restricting to $X\subseteq \mathbb{P}^n$ embedded by a subspace $V\subseteq H^0(\mathcal{L})$, we obtain in combination with the exact sequence (\ref{definition of M}), the isomorphism:
\begin{equation}\label{tangent isomorphisms}
M^{\vee}_{\mathcal{L},V}\otimes \mathcal{L}^{k+1}= M^{\vee}_{\mathcal{O}_X(1), \mathbb{P}^n}\otimes \mathcal{O}_X(k+1)\simeq T\mathbb{P}^n(k)\vert_X. \end{equation}

Taking duals, we obtain:

\begin{equation}\label{cotangent ismorphisms}
M_{\mathcal{L},V}\otimes \mathcal{L}^{k-1}= M_{\mathcal{O}_X(1), \mathbb{P}^n}\otimes \mathcal{O}_X(k-1)\simeq \Omega\mathbb{P}^n(k)\vert_X. \end{equation}

Ulrich´s bundles admit many characterizations and enjoy several interesting geometric properties. Here we list some that will be used in the sequel.

\begin{Proposition}\label{proposition properties of Ulrich}
    Let $\mathcal{E}$ be an $H-$Ulrich bundle on an $d-$dimensional projective variety $X\subseteq \mathbb{P}^n$. Assume $rank(\mathcal{E})=r$ and $\deg X=m$. Then:
    \begin{itemize}
        \item[i)] $H^i(\mathcal{E}(-j))=0$ for all $j\in \mathbb{Z}$, $1\le j \le d-1$ and $h^0(\mathcal{E})=rm$.
        \item[ii)] if $\mathcal{E}$ is $H-$Ulrich, then $\mathcal{E}\vert_Y$ is $H\vert_Y-$Ulrich for $Y=X\cap H$.
        \item[iii)] $\mathcal{E}$ is $H-$semistable.
    \end{itemize}
\end{Proposition}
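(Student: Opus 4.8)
The plan is to read off all three items directly from Definition~\ref{definition of Ulrich}, where throughout $H$ is the hyperplane class of the embedding $X\subseteq\mathbb{P}^n$, so that $\mathcal{E}(-j)=\mathcal{E}\otimes\mathcal{O}_X(-jH)$ and $m=\deg X=H^d$. For i), the vanishing $H^i(\mathcal{E}(-j))=0$ for $1\le j\le d-1$ is simply Definition~\ref{definition of Ulrich} restricted to a sub-range of twists, so nothing is to be proved there. For $h^0(\mathcal{E})=rm$ I would first observe that $\mathcal{E}$ is $0$-regular in the sense of Castelnuovo--Mumford: the hypothesis gives $H^i(\mathcal{E}(-i))=0$ for $1\le i\le d$, while $H^i=0$ automatically for $i>d$, so Mumford's lemma yields $H^i(\mathcal{E})=0$ for every $i\ge 1$ and hence $h^0(\mathcal{E})=\chi(\mathcal{E})$. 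Then I would use the Hilbert polynomial $P(t)=\chi(\mathcal{E}(tH))$: it has degree $d$, leading coefficient $rH^d/d!=rm/d!$, and vanishes at $t=-1,-2,\dots,-d$; a degree-$d$ polynomial with these $d$ roots and this leading coefficient is forced to be $P(t)=\frac{rm}{d!}(t+1)(t+2)\cdots(t+d)$, so $h^0(\mathcal{E})=P(0)=rm$.

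For ii) (the case $d=1$ being vacuous), let $Y=X\cap H$ be a general, hence smooth, member of $|H|$, of dimension $d-1$. Twisting the restriction sequence $0\to\mathcal{O}_X(-H)\to\mathcal{O}_X\to\mathcal{O}_Y\to0$ by $\mathcal{E}(-pH)$ and using $Y\sim H$ gives $0\to\mathcal{E}(-(p+1)H)\to\mathcal{E}(-pH)\to(\mathcal{E}|_Y)(-pH|_Y)\to0$. For $1\le p\le d-1$ both $p$ and $p+1$ lie in $\{1,\dots,d\}$, so by Ulrichness the two outer terms have vanishing cohomology in all degrees, and the long exact sequence forces $H^i((\mathcal{E}|_Y)(-pH|_Y))=0$ for all $i$ and all such $p$; this is exactly the condition for $\mathcal{E}|_Y$ to be $H|_Y$-Ulrich on $Y$.

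For iii) I would prove $\mu_H$-semistability by induction on $d$, using ii) to descend to a curve. Base case $d=1$: here $\mathcal{E}$ Ulrich means $h^0(\mathcal{E}(-H))=h^1(\mathcal{E}(-H))=0$, so Riemann--Roch gives $\mu(\mathcal{E})=m-1+g$ with $g=g(X)$; if $\mathcal{F}\subseteq\mathcal{E}$ is a subsheaf of rank $s$ (automatically locally free), then $\mathcal{F}(-H)\hookrightarrow\mathcal{E}(-H)$ gives $h^0(\mathcal{F}(-H))=0$, whence $\chi(\mathcal{F}(-H))\le0$ and, again by Riemann--Roch, $\mu(\mathcal{F})\le m-1+g=\mu(\mathcal{E})$. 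Inductive step $d\ge2$: given a destabilizing subsheaf $\mathcal{F}\subseteq\mathcal{E}$, which we may assume saturated so that $\mathcal{E}/\mathcal{F}$ is torsion-free, cut $X$ with $d-1$ general hyperplanes to obtain a smooth irreducible curve $C$ of degree $m$ for which iterating ii) makes $\mathcal{E}|_C$ Ulrich on $C$ and for which the sequence $0\to\mathcal{F}|_C\to\mathcal{E}|_C\to(\mathcal{E}/\mathcal{F})|_C\to0$ remains exact. Since $\mu_H(\mathcal{F})$ equals the ordinary slope $\mu_C(\mathcal{F}|_C)$ (because $c_1(\mathcal{F})\cdot H^{d-1}=\deg\mathcal{F}|_C$), and likewise for $\mathcal{E}$, the base case applied on $C$ yields $\mu_H(\mathcal{F})=\mu_C(\mathcal{F}|_C)\le\mu_C(\mathcal{E}|_C)=\mu_H(\mathcal{E})$, contradicting that $\mathcal{F}$ destabilizes.

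The only genuinely delicate point, I expect, is the genericity input used in the inductive step of iii)---that a general linear section keeps $\mathcal{E}/\mathcal{F}$ torsion-free and the restricted sequence exact---which one settles by the usual Bertini and restriction theorems; items i) and ii) are pure cohomological bookkeeping. If one wants Gieseker semistability rather than $\mu$-semistability, the same reduction combined with a comparison of Hilbert polynomials on $C$ (or a direct appeal to, e.g., \cite{HartshorneCasanellas}) does the job.
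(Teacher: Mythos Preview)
Your argument is essentially correct, but note that the paper does not actually give its own proof of this proposition: immediately after stating it, the authors simply write that ``the proof of Proposition~\ref{proposition properties of Ulrich} can be found in \cite{Beauville1} or \cite{CostaMirog}.'' So there is nothing to compare against beyond the standard references, and what you wrote is precisely the standard proof one finds there: Castelnuovo--Mumford regularity plus the factorisation of the Hilbert polynomial for i), the restriction exact sequence for ii), and reduction to a curve via ii) for iii).

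One remark on your reading of i). You treat the vanishing statement as ``$H^i(\mathcal{E}(-j))=0$ for all $i$ and $1\le j\le d-1$'', which is indeed a tautological sub-case of Definition~\ref{definition of Ulrich}. The intended statement, however, is almost certainly the stronger fact that the \emph{intermediate} cohomology vanishes for \emph{all} twists, i.e.\ $H^i(\mathcal{E}(j))=0$ for every $j\in\mathbb{Z}$ and $1\le i\le d-1$; the phrase ``for all $j\in\mathbb{Z}$'' in the paper points to this, and the indices $i,j$ appear to have been accidentally swapped in the bound. This is the standard ACM property of Ulrich bundles (see \cite{Beauville1}, Proposition~2.1, or \cite{CostaMirog}, Theorem~2.3.4), and it does require a short argument: one combines the $0$-regularity you already established (giving $H^i(\mathcal{E}(j))=0$ for $i\ge 1$, $j\ge -i$) with Serre duality and the observation that $\mathcal{E}^\vee(K_X+(d+1)H)$ is again Ulrich, to cover the remaining range $j<-i$. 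Your proof of $h^0(\mathcal{E})=rm$ via the Hilbert polynomial is fine as written.
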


Moreover,  from the Definition \ref{definition of Ulrich} it immediately follows that:

\begin{Proposition}\label{PropositionUlrichExacteness} If 

$$0\to \mathcal{F}_1 \to \mathcal{E} \to \mathcal{F}_2\to 0$$
is an exact sequence on vector bundles on $X$ and $\mathcal{F}_i$, $i=1,2$ are Ulrich´s, then $\mathcal{E}$ also is.\end{Proposition}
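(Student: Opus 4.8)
The plan is to deduce the vanishing required by Definition \ref{definition of Ulrich} for $\mathcal{E}$ directly from the long exact cohomology sequence attached to a twist of the given short exact sequence. Write $d=\dim X$. For each integer $p$ with $1\le p\le d$, I would first tensor
$$0\to \mathcal{F}_1 \to \mathcal{E} \to \mathcal{F}_2\to 0$$
with the invertible sheaf $\mathcal{O}_X(-pH)$. Since $\mathcal{F}_1$, $\mathcal{E}$ and $\mathcal{F}_2$ are locally free, hence flat, this tensor product is exact, so
$$0\to \mathcal{F}_1(-pH) \to \mathcal{E}(-pH) \to \mathcal{F}_2(-pH)\to 0$$
is again a short exact sequence of sheaves on $X$.

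Next I would pass to the associated long exact sequence in sheaf cohomology, which in every degree $i$ contains the segment
$$H^i(\mathcal{F}_1(-pH))\longrightarrow H^i(\mathcal{E}(-pH))\longrightarrow H^i(\mathcal{F}_2(-pH)).$$
Because $\mathcal{F}_1$ and $\mathcal{F}_2$ are $H$-Ulrich, Definition \ref{definition of Ulrich} gives $H^i(\mathcal{F}_1(-pH))=H^i(\mathcal{F}_2(-pH))=0$ for all $i$ and all $1\le p\le d$; hence $H^i(\mathcal{E}(-pH))$ is squeezed between two zero groups and therefore vanishes. As the index $i$ and the twist $p$ in the stated range were arbitrary, this is precisely the condition in Definition \ref{definition of Ulrich} for $\mathcal{E}$ to be $H$-Ulrich, which finishes the proof.

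There is essentially no obstacle here — the argument is the diagram chase the authors allude to with ``it immediately follows'' — and the only point deserving a word is the exactness of the twisted sequence, which holds automatically since all three terms are locally free. I would also remark in passing that the same three-term cohomology sequences yield the stronger ``two out of three'' statement, namely that if any two of $\mathcal{F}_1,\mathcal{E},\mathcal{F}_2$ are $H$-Ulrich then so is the third, although only the version stated is used later in the paper.
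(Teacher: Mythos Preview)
Your argument is correct and is exactly the ``immediate'' verification the paper alludes to: the authors give no proof beyond the remark that the proposition follows directly from Definition~\ref{definition of Ulrich}, and your long-exact-sequence chase is the intended one-line justification. Your side remark on the ``two out of three'' property is also correct, though not needed in the paper.
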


The proof of Proposition \ref{proposition properties of Ulrich} can be found in \cite{Beauville1} or \cite{CostaMirog}.

\section{Proof of Theorem \ref{Theorem Main}, item i)}\label{Item i)}

We start by establishing the following:

\begin{Lemma}\label{Lemma for dual M}  Let $X$ be a projective variety and $L$,  $V\subseteq H^0(\mathcal L)$ an embedding subspace. If $M_{\mathcal{L},V}^{\vee}\otimes \mathcal{L}^{k+1}$ is $\mathcal{L}^a-$Ulrich for some $a\in \mathbb{N}$, then 

$$0\le k+1 <a;$$
unless $n=1$, $k=-2$ and $a=1$.
In particular, except for that case:
$$\vert (k+1-a)L\vert =\emptyset .$$ 

\end{Lemma}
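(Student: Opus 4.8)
The plan is to analyze the vanishing conditions imposed by the Ulrich hypothesis on $M_{\mathcal{L},V}^{\vee}\otimes\mathcal{L}^{k+1}$ using the defining sequence \eqref{definition of M dual}. Twisting \eqref{definition of M dual} by $\mathcal{L}^{k+1-pa}$ for each $p$ with $1\le p\le d$ gives
\[
0\to \mathcal{L}^{k-pa}\to V\otimes\mathcal{L}^{k+1-pa}\to M_{\mathcal{L},V}^{\vee}\otimes\mathcal{L}^{k+1-pa}\to 0.
\]
The Ulrich condition says all cohomology of the right-hand term vanishes for every such $p$, so the long exact sequence forces $H^i(\mathcal{L}^{k-pa})\cong H^{i}(V\otimes\mathcal{L}^{k+1-pa})$ for $i\ge 1$ together with an exact piece in degrees $0,1$. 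The key move is to extract the consequences at the two extreme twists $p=1$ and (when $d\ge 2$) $p=d$, and to use that $\mathcal{L}$ is very ample, so $H^0(\mathcal{L}^m)\neq 0$ for $m\ge 0$ and $H^0$ is strictly increasing, while positivity of $\mathcal{L}$ controls the vanishing of $H^0$ on negative twists.

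First I would prove $k+1<a$. If $k+1\ge a$, then in the sequence with $p=1$ the line bundle $\mathcal{L}^{k+1-a}$ is globally generated (it is $\mathcal{L}$ to a nonnegative power), so $V\otimes\mathcal{L}^{k+1-a}\to M_{\mathcal{L},V}^{\vee}\otimes\mathcal{L}^{k+1-a}$ is surjective on $H^0$; combined with $H^0(M_{\mathcal{L},V}^{\vee}\otimes\mathcal{L}^{k+1-a})=0$ this would give that $V\otimes H^0(\mathcal{L}^{k+1-a})$ maps onto a space contained in $H^0(\mathcal{L}^{k-a})$ via the connecting-type comparison — more precisely, the surjection $V\otimes\mathcal{O}_X\to\mathcal{L}$ twisted down shows $H^0(V\otimes\mathcal{L}^{k+1-a})\to H^0(\mathcal{L}^{k+2-a})$ is surjective (again by global generation), forcing $\dim V\cdot h^0(\mathcal{L}^{k+1-a})\le h^0(\mathcal{L}^{k-a})+h^0(\mathcal{L}^{k+2-a})$, which is absurd for $\dim V\ge 2$ since $h^0(\mathcal{L}^{k+1-a})\ge h^0(\mathcal{L}^{k-a})$ and also $\ge$ a nontrivial bound coming from very ampleness unless we are in a boundary case. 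The genuinely careful bookkeeping here is the source of the exceptional case $n=1$, $k=-2$, $a=1$: when $\dim V=2$ (so $X=\mathbb{P}^1$ embedded as a line) the rank-one bundle $M_{\mathcal{L},V}^{\vee}$ is itself a line bundle and the inequalities degenerate. I expect this to be the main obstacle — isolating exactly which numerical coincidences survive, and checking that $n=1,k=-2,a=1$ is the only one.

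Next, $0\le k+1$: suppose $k+1<0$, i.e.\ $k+1-pa<0$ for all $p\ge 1$. Then from the same sequence $H^0(M_{\mathcal{L},V}^{\vee}\otimes\mathcal{L}^{k+1-a})=0$ is automatic and gives no information, so instead I use the $H^1$ end: $H^1(M_{\mathcal{L},V}^{\vee}\otimes\mathcal{L}^{k+1-a})=0$ together with $H^0(V\otimes\mathcal{L}^{k+1-a})=0$ forces $H^1(\mathcal{L}^{k-a})$ to receive nothing and one continues up the cohomology ladder; the contradiction comes from comparing with the Ulrich normalization $h^0(M_{\mathcal{L},V}^{\vee}\otimes\mathcal{L}^{k+1})=r\cdot\deg X$ of Proposition \ref{proposition properties of Ulrich}(i), which cannot hold when the defining sequence at twist $0$ forces $H^0$ too small. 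Alternatively, and more cleanly, I would invoke Proposition \ref{proposition properties of Ulrich}(iii): an Ulrich bundle is semistable, hence has nonnegative degree after the standard normalization, and $\mu(M_{\mathcal{L},V}^{\vee}\otimes\mathcal{L}^{k+1})$ being too negative when $k+1<0$ contradicts the slope of $\mathcal{O}_X$, which must be a subsheaf-or-quotient comparison via \eqref{definition of M dual}. Finally, the "in particular" clause is immediate: from $0\le k+1<a$ we get $k+1-a<0$, and since $\mathcal{L}$ is very ample (in particular $\mathcal{L}^{-1}$ has no sections and more generally $\mathcal{L}^{-m}$ has none for $m>0$ — because $L$ is effective and nonzero, any negative multiple is non-effective), $|(k+1-a)L|=\emptyset$.
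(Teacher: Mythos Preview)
Your overall plan — twist the defining sequence and read off the constraints from the Ulrich vanishing — is the right one, and it is exactly what the paper does. But the execution has two real gaps.

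\textbf{The upper bound $k+1<a$.} Your chain of inequalities is not correct as written: the displayed bound $\dim V\cdot h^0(\mathcal{L}^{k+1-a})\le h^0(\mathcal{L}^{k-a})+h^0(\mathcal{L}^{k+2-a})$ does not follow from the two sequences you invoke, and the claim that the map on $H^0$ is surjective ``by global generation of $\mathcal{L}^{k+1-a}$'' is unjustified (surjectivity on $H^0$ needs a vanishing of $H^1$, not global generation of the twist). You are missing the one-line argument the paper uses: $M_{\mathcal{L},V}^{\vee}$ is a quotient of $V\otimes\mathcal{O}_X$, hence globally generated, so $h^0(M_{\mathcal{L},V}^{\vee}\otimes\mathcal{L}^{j})\neq 0$ for every $j\ge 0$. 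Since Ulrich forces $h^0(M_{\mathcal{L},V}^{\vee}\otimes\mathcal{L}^{k+1-a})=0$, one gets $k+1-a<0$ immediately. No inequalities among $h^0(\mathcal{L}^m)$'s are needed.

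\textbf{The lower bound $k+1\ge 0$.} Your semistability idea is exactly what the paper uses, but only on curves: there $\deg(M^{\vee}_{\mathcal{L},V}\otimes\mathcal{L}^{k+1})=(n(k+1)+1)\deg L<0$ when $k+1<0$ and $n\ge 2$, and semistability (from the Ulrich hypothesis, preserved under line-bundle twist) then kills $H^0$. This degree computation is also precisely where the exceptional case $n=1$, $k=-2$ slips through, since then the degree is zero. For $\dim X\ge 2$, however, neither of your sketches is complete: the slope comparison with $\mathcal{O}_X$ you allude to is not set up, and your first alternative (``$H^0$ too small'') still requires controlling $h^1(\mathcal{L}^{k})$. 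The paper handles $\dim X\ge 2$ by a direct appeal to Kodaira vanishing: for $k\le -2$ one has $h^0(\mathcal{L}^{k+1})=0$ and $h^1(\mathcal{L}^{k})=0$, so the twisted sequence gives $h^0(M^{\vee}_{\mathcal{L},V}\otimes\mathcal{L}^{k+1})=0$, contradicting $h^0=r\cdot\deg X>0$ from Proposition~\ref{proposition properties of Ulrich}(i). You should invoke Kodaira explicitly here.

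Your final paragraph on $\vert (k+1-a)L\vert=\emptyset$ is fine.
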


\begin{proof}
Assume we are not in the exceptional case $n=a=1$ and $k=-2$. We claim that $h^0(M^{\vee}_{\mathcal{L}, V}\otimes \mathcal{L}^{k+1})\neq 0$ if and only if $k+1\geq 0.$ Note that  $h^0(M^{\vee}_{\mathcal{L}, V})\neq 0$ because $M^{\vee}_{\mathcal{L}, V}$ is globally generated. Therefore, for any $k+1\geq 0,$ we have $h^0(M^{\vee}_{\mathcal{L}, V}\otimes \mathcal{L}^{k+1})\neq 0.$
     
If $k+1<0$, we split the proof for the case $\dim X=1$ and $\dim X\ge 2$. If $X$ is a curve,  then the degree: $$\deg M_{\mathcal{L},V}^{\vee}\otimes \mathcal{L}^{k+1}= \deg \mathcal{L}+n(k+1)\deg \mathcal{L}= (n(k+1)+1)\deg \mathcal{L}$$
is negative. Since $M_{\mathcal{L},V}^{\vee}\otimes \mathcal{L}^{k+1}$ is semistable (Proposition \ref{proposition properties of Ulrich} iii)) and has negative degree, it follows that $h^0(M_{\mathcal{L},V}^{\vee}\otimes \mathcal{L}^{k+1})=0.$ 

If $\dim X\ge 2$, and $k\leq -2,$ by the Kodaira Vanishing Theorem (see \cite{GriffithsHarris}, Chapter 1.2) and from the exact sequence:

\begin{equation}\label{DualMkvariedades}
    0\to \mathcal{L}^{k}\to V\otimes \mathcal{L}^{k+1}\to M_{\mathcal{L},V}^{\vee}\otimes \mathcal{L}^{k+1}\to 0,
\end{equation}
we have $h^0(M_{\mathcal{L},V}^{\vee}\otimes \mathcal{L}^{k+1})=0$.

If $M_{\mathcal{L},V}^{\vee}\otimes \mathcal{L}^{k+1}$ is $\mathcal{L}^a$-Ulrich then $h^0(M^\vee_{\mathcal{L},V}\otimes \mathcal{L}^{k+1-a})=0$, and the Lemma follows from the claim.
 \end{proof}

 \begin{Remark}
     Note that, being $X\to \mathbb{P}(V)$ an embedding, $n=1$ simply means $X=\mathbb{P}^1$. In the exceptional case $n=1$, $k=-2$, the degree of $M^{\vee}_{\mathcal{L}, V}\otimes \mathcal{L}^{k+1}$ is $0$ and we don´t have the vanishing of the cohomology. In fact, as explained in the next proposition, this case gives rise to an Ulrich bundle.
 \end{Remark}

We prove Theorem \ref{Theorem Main} i) successively for curves, surfaces, and varieties of dimension at least $3$.

\begin{Proposition} Let $X$ be a projective curve, $L$ be a line bundle and $V\subseteq H^0(\mathcal{L})$ an embedding subspace. Then, $M_{\mathcal{L},V}^{\vee}\otimes \mathcal{L}^{k+1}$ is $\mathcal{L}^a$-Ulrich if and only if either
     \begin{enumerate}
         \item[i)] $X=\mathbb{P}^1$, $\mathcal{L}=\mathcal{O}_{\mathbb{P}^1}(2)$, $V=H^0(\mathcal{O}_{\mathbb{P}^1}(2))$ and $a=k+2$, thus $X$ is a plane non-singular conic; or
           \item[ii)] $X=\mathbb{P}^1$, $\mathcal{L}=\mathcal{O}_{\mathbb{P}^1}(1)$, $V=H^0(\mathcal{O}_{\mathbb{P}^1}(1))$ and $a=k+3.$
     \end{enumerate} \end{Proposition}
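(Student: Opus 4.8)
The plan is to work with the identification $M^{\vee}_{\mathcal{L},V}\otimes\mathcal{L}^{k+1}\simeq T\mathbb{P}^n(k)|_X$ from \eqref{tangent isomorphisms}, so that the bundle in question is the restriction of a twisted tangent bundle on an ambient $\mathbb{P}^n$ with $X$ embedded as a curve of degree $m=L^d=\deg X$. Since $\dim X=1$, the Ulrich condition is simply $H^i(M^{\vee}_{\mathcal{L},V}\otimes\mathcal{L}^{k+1}\otimes\mathcal{L}^{-a})=0$ for $i=0,1$, i.e. the twisted bundle $\mathcal{E}=M^{\vee}_{\mathcal{L},V}\otimes\mathcal{L}^{k+1-a}$ has no cohomology. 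First I would record, from Lemma \ref{Lemma for dual M} and Proposition \ref{proposition properties of Ulrich}, the two numerical constraints that any solution must satisfy: by Lemma \ref{Lemma for dual M} we get $0\le k+1<a$ unless $(n,k,a)=(1,-2,1)$; and since an Ulrich bundle $\mathcal{E}'=M^{\vee}_{\mathcal{L},V}\otimes\mathcal{L}^{k+1}$ on a $1$-dimensional $X$ satisfies $h^0(\mathcal{E}')=\mathrm{rank}\cdot\deg X=nm$ and $\chi(\mathcal{E}'(-a))=0$, Riemann–Roch gives $\deg\mathcal{E}'-na\cdot m +n(1-g)=0$, where $g$ is the genus of $X$. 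Using $\deg\mathcal{E}'=(n(k+1)+1)m$ this pins down a relation among $n$, $m$, $g$, $k$, $a$.

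The key step is to exploit semistability together with this Euler-characteristic equation to force $g=0$ and $m$ small. Since $\mathcal{E}'$ is $\mathcal{L}^a$-Ulrich it is $\mathcal{L}$-semistable (Proposition \ref{proposition properties of Ulrich} iii)), and $\mathcal{L}^{-1}\subseteq M^{\vee}_{\mathcal{L},V}$ is a sub-line-bundle (from \eqref{definition of M dual}), so comparing slopes gives $-m\le \mu(M^{\vee}_{\mathcal{L},V})=m/n$, which is automatic — instead the useful direction comes from the quotient: dualizing \eqref{definition of M} shows $M^{\vee}_{\mathcal{L},V}$ has $\mathcal{L}$ as a quotient, giving $\mu(M^{\vee}_{\mathcal{L},V})\le m$, again automatic, so I would rather use $h^1$-vanishing directly. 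Concretely, from the twisted sequence
\begin{equation*}
0\to \mathcal{L}^{k-a}\to V\otimes\mathcal{L}^{k+1-a}\to M^{\vee}_{\mathcal{L},V}\otimes\mathcal{L}^{k+1-a}\to 0
\end{equation*}
the vanishing $h^0=h^1=0$ of the middle term's relatives forces, via the long exact sequence, both $h^1(\mathcal{L}^{k+1-a})=0$ (from surjectivity on $H^0$ being needed, combined with $h^0(M^{\vee}\otimes\mathcal{L}^{k+1-a})=0$) and $h^0(\mathcal{L}^{k-a})$-related constraints; pushing this through, together with $h^1(\mathcal{E}')=0$ from the Ulrich property, will force $\mathcal{L}^{k+1-a}$ and nearby twists of $\mathcal{L}$ to have very restricted cohomology. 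On $\mathbb{P}^1$ this translates to explicit inequalities on the single integer $\deg\mathcal{L}$, and for $g\ge 1$ one derives a contradiction with the numerical equation.

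Once $X=\mathbb{P}^1$ is established, everything becomes a finite check: write $\mathcal{L}=\mathcal{O}_{\mathbb{P}^1}(e)$ with $e=\deg X=m$, so $n=\dim V-1$ and $V\subseteq H^0(\mathcal{O}_{\mathbb{P}^1}(e))$ has dimension $n+1\le e+1$. The syzygy bundle of a subspace $V$ of $H^0(\mathcal{O}_{\mathbb{P}^1}(e))$ giving an embedding is a direct sum of line bundles by Grothendieck's splitting theorem; when $V=H^0$ one gets $M^{\vee}_{\mathcal{L},V}=\mathcal{O}_{\mathbb{P}^1}(1)^{\oplus e}$ (the rational normal curve case, where $T\mathbb{P}^e|_{\mathbb{P}^1}=\mathcal{O}(e+1)\oplus\mathcal{O}(1)^{\oplus?}$ — I would compute the exact splitting type and impose the Ulrich vanishing). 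The Ulrich condition $H^*(\mathcal{O}_{\mathbb{P}^1}(c_i)\otimes\mathcal{O}(-ja))=0$ for $j=1$ forces each summand degree $c_i$ to equal $a-1$, which combined with the splitting type of $M^{\vee}_{\mathcal{L},V}\otimes\mathcal{L}^{k+1}$ and the constraint $\sum c_i = \deg = (n(k+1)+1)e$ yields exactly the two families in (i) and (ii): $e=2$, $V=H^0(\mathcal{O}_{\mathbb{P}^1}(2))$, $a=k+2$; and $e=1$, $V=H^0(\mathcal{O}_{\mathbb{P}^1}(1))$, $a=k+3$. I also need to rule out proper subspaces $V\subsetneq H^0(\mathcal{O}_{\mathbb{P}^1}(e))$: a non-balanced splitting type of $M^{\vee}_{\mathcal{L},V}$ prevents all summands of the twist from having the same degree, contradicting the Ulrich requirement. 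The main obstacle I anticipate is the first part — cleanly forcing $g=0$ without circular use of the classification — which I expect to handle by combining the Riemann–Roch identity $\chi(\mathcal{E}'(-a))=0$ with the semistability slope bound $\mu_{\min}(\mathcal{E}')\ge \mu(\mathcal{E}')-$ (something controlled by the $\mathcal{L}^{-1}$-sub and $\mathcal{L}$-quotient), turning $h^1(\mathcal{E}'(-a))=0$ into an inequality $a m \le$ (explicit function of $g$) that fails for $g\ge 1$ once the Lemma's bound $k+1<a$ is inserted.
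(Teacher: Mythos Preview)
Your plan assembles the right ingredients but overlooks the shortest path at the step you flag as your ``main obstacle.'' From $\chi(M^\vee_{\mathcal{L},V}\otimes\mathcal{L}^{k+1-a})=0$ and Riemann--Roch you have already written down (in equivalent form) the identity
\[
m\bigl(n(a-k-1)-1\bigr)=n(1-g),
\]
and Lemma~\ref{Lemma for dual M} gives $a-k-1\ge 1$. For $n\ge 2$ the left side is then $\ge m(n-1)>0$, so $g=0$ immediately; and $n=1$ already means $X=\mathbb{P}^1$. (One also notes $g=1$ would force $n(a-k-1)=1$, hence $n=1$, a contradiction.) That is the paper's entire argument for $g=0$: no semistability comparisons, no $h^1$-chasing, no slope bounds are needed, and the detours you sketch through $\mu_{\min}$ and sub/quotient line bundles are both unnecessary and, as you yourself observe, not obviously conclusive.

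Once $g=0$, the same identity reads $m=\dfrac{n}{n(a-k-1)-1}$, and since $m\ge 1$ a two-case split on $a-k-1\in\{1,2\}$ (the value $\ge 3$ gives $m<1$) yields exactly $(n,m,a)=(2,2,k+2)$ or $(1,1,k+3)$. In both cases $n+1=m+1=h^0(\mathcal{O}_{\mathbb{P}^1}(m))$, so $V=H^0(\mathcal{L})$ is forced automatically and your separate ``non-balanced splitting'' argument to exclude proper subspaces is superfluous. Your Grothendieck-splitting approach for the $\mathbb{P}^1$ endgame is correct in principle but heavier than needed; splitting is used only to verify the two candidates are Ulrich. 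Incidentally, your tentative formula $T\mathbb{P}^e|_{\mathbb{P}^1}=\mathcal{O}(e+1)\oplus\mathcal{O}(1)^{\oplus ?}$ is wrong: for the rational normal curve $M^\vee_{\mathcal{L},V}=\mathcal{O}_{\mathbb{P}^1}(1)^{\oplus e}$, hence $T\mathbb{P}^e|_{\mathbb{P}^1}\simeq M^\vee\otimes\mathcal{L}=\mathcal{O}_{\mathbb{P}^1}(e+1)^{\oplus e}$.
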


\begin{proof}
We have the following exact sequence
\begin{equation}\label{DualMka}
    0\to \mathcal{L}^{k-a}\to V\otimes \mathcal{L}^{k+1-a}\to M_{\mathcal{L},V}^{\vee}\otimes \mathcal{L}^{k+1-a}\to 0.
\end{equation}
    If $M_{\mathcal{L},V}^{\vee}\otimes \mathcal{L}^{k+1}$ is $\mathcal{L}^a$-Ulrich, then $h^i(M_{\mathcal{L},V}^{\vee}\otimes \mathcal{L}^{k+1-a})=0$ for any $i$. Thus, 
    \begin{equation*}
        \chi(\mathcal{L}^{k-a})=(n+1)\chi(\mathcal{L}^{k+1-a}).
    \end{equation*}
    which is equivalent to: 
    \begin{equation}\label{m}
        m(n(a-k-1)-1)=n(1-g),
    \end{equation}
with $m= \deg L$.
  Note that $g=1$ is impossible, because in that case the left-hand side of (\ref{m}) is zero, thus  $n=1$ and $a=k+2$, but $n=1$ implies $X=\mathbb{P}^1$, which contradicts $g=1$. 
  
 Since, according to Lemma \ref{Lemma for dual M}, the left-hand side of (\ref{m}) is positive, it follows that $g=0$ and 
        \begin{equation*}
            m=\frac{n}{n(a-k-1)-1}.
        \end{equation*}
By Lemma \ref{Lemma for dual M} $a>k+1$, therefore only the following cases are possible:
    \begin{enumerate}
 \item $a=k+2:$ the only solution is $n=2$, $m=2.$ Therefore, $\mathcal{L}=\mathcal{O}_{\mathbb{P}^1}(2)$ and $ V=H^0(\mathcal{O}_{\mathbb{P}^1}(2))$. From:

 $$0\to\mathcal{O}_{\mathbb{P}^1}(-2)\to \mathbb{C}^3\otimes \mathcal{O}_{\mathbb{P}^1}\to M_{\mathcal{L},V}^{\vee} \to 0, $$
 it follows that
 $M_{\mathcal{L},V}^{\vee}=\mathcal{O}_{\mathbb{P}^1}(1)\oplus \mathcal{O}_{\mathbb{P}^1}(1)$. It is readily checked that under these conditions $M_{\mathcal{L},V}\otimes \mathcal{L}^{k+1}$ is $\mathcal{L}^{k+2}-$Ulrich. 
 \item $a=k+3:$ the only solution is $n=1$, $m=1$. Therefore, $\mathcal{L}=\mathcal{O}_{\mathbb{P}^1}(1)$, $ V=H^0(\mathcal{O}_{\mathbb{P}^1}(1))$, and $X=\mathbb{P}^1$. In this case $M_{\mathcal{L},V}^{\vee}=\mathcal{O}_{\mathbb{P}^1}(1)$ and $M_{\mathcal{L},V}^{\vee}\otimes \mathcal{L}^{k+1}$ is $\mathcal{L}^{k+3}-$Ulrich. 
\item $a>k+3:$ this case is impossible, because we obtain: 
$$m <\frac{n}{2n-1}\le 1.$$ 
\end{enumerate}
\end{proof}

Now, we address the case of surfaces:

\begin{Proposition}\label{dual M for surfaces}
  Let $X$ be a surface, $\mathcal{L}$ a very ample line bundle and $V\subseteq H^0(\mathcal{L})$ an embedding subspace, $k\in \mathbb{Z}$, and $a\in \mathbb{N}$. Then, $M^{\vee}_{\mathcal{L}, V}\otimes \mathcal{L}^{k+1}$ is $\mathcal{L}^a-$Ulrich if and only $k=0$, $a=2$ and $X=\mathbb{P}^2$, that is for the sheaf $T\mathbb{P}^2$ with respect to $\mathcal{O}_{\mathbb{P}^2}(2)$ (compare with \cite{Montero}).
 \end{Proposition}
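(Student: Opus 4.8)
The plan is to descend to the curve case by a hyperplane section, apply the previous Proposition (the one just proved for curves), and then eliminate the surviving numerical possibilities with Riemann--Roch.

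Set $\mathcal{E}:=M^{\vee}_{\mathcal{L},V}\otimes\mathcal{L}^{k+1}$, a bundle of rank $n$, and write $m:=\deg X=L^{2}$; by Lemma \ref{Lemma for dual M} we may assume $0\le k+1<a$. First I would take a general $C\in|\mathcal{L}^{a}|$: by Bertini it is smooth and irreducible, and by Proposition \ref{proposition properties of Ulrich}(ii) the restriction $\mathcal{E}|_{C}$ is $(\mathcal{L}^{a}|_{C})$-Ulrich on $C$. The key bookkeeping step is to recognize $\mathcal{E}|_{C}$ as a twisted dual syzygy bundle: restricting (\ref{definition of M dual}) to $C$ gives $0\to\mathcal{L}^{-1}|_{C}\to V\otimes\mathcal{O}_{C}\to M^{\vee}_{\mathcal{L},V}|_{C}\to 0$, and when $a\ge 2$ one has $H^{0}(\mathcal{L}^{1-a})=0$, so $V$ injects into $H^{0}(C,\mathcal{L}|_{C})$, the curve $C$ is non-degenerate in $\mathbb{P}(V)$, and $M^{\vee}_{\mathcal{L},V}|_{C}\simeq M^{\vee}_{\mathcal{L}|_{C},V}$. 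Thus $\mathcal{E}|_{C}=M^{\vee}_{\mathcal{L}|_{C},V}\otimes(\mathcal{L}|_{C})^{k+1}$ and the Proposition for curves applies verbatim.

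For $a\ge 2$, the curve classification forces $(C,\mathcal{L}|_{C})$ to be $(\mathbb{P}^{1},\mathcal{O}(2))$ with $a=k+2$ or $(\mathbb{P}^{1},\mathcal{O}(1))$ with $a=k+3$; in particular $am=\deg(\mathcal{L}|_{C})\le 2$, and since $a\ge 2$, $m\ge 1$ this leaves only $a=2$, $m=1$, and we are necessarily in the first case, i.e. $k=0$. A non-degenerate surface of degree $1$ in $\mathbb{P}^{n}$ is a linearly embedded $\mathbb{P}^{2}$, so $X=\mathbb{P}^{2}$, $n=2$, $\mathcal{L}=\mathcal{O}_{\mathbb{P}^{2}}(1)$, and by (\ref{tangent isomorphisms}) the bundle is $T\mathbb{P}^{2}$ polarized by $\mathcal{O}_{\mathbb{P}^{2}}(2)$. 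For the converse I would check from the Euler sequence that $H^{\bullet}(T\mathbb{P}^{2}(-2))=H^{\bullet}(T\mathbb{P}^{2}(-4))=0$ --- the relevant connecting map $H^{2}(\mathcal{O}(-4))\to H^{2}(\mathcal{O}(-3))^{\oplus 3}$ being an isomorphism --- which is exactly the Ulrich condition with respect to $\mathcal{O}_{\mathbb{P}^{2}}(2)$; alternatively, this is the Veronese case of \cite{Montero}.

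Finally I would treat $a=1$, where Lemma \ref{Lemma for dual M} forces $k=-1$ (its exceptional value $n=1$ cannot occur on a surface). This case is already covered by \cite{Lopez-Collino}; for a self-contained argument, restrict again to a general $C\in|\mathcal{L}|$. Now the kernel of $V\to H^{0}(C,\mathcal{L}|_{C})$ is one-dimensional, spanned by the linear form cutting out $C$, so $M^{\vee}_{\mathcal{L},V}|_{C}\simeq M^{\vee}_{\mathcal{L}|_{C},\bar V}\oplus\mathcal{O}_{C}$ with $\bar V$ an embedding subspace and $C$ non-degenerate in $\mathbb{P}(\bar V)$. Since a direct summand of an Ulrich bundle is Ulrich, $M^{\vee}_{\mathcal{L}|_{C},\bar V}$ is $(\mathcal{L}|_{C})$-Ulrich, and the curve Proposition together with the Lemma leaves only $\deg(\mathcal{L}|_{C})=m=2$ and $\dim\mathbb{P}(\bar V)=2$, i.e. $n=3$. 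Imposing $\chi(M^{\vee}_{\mathcal{L},V}(-H))=\chi(M^{\vee}_{\mathcal{L},V}(-2H))=0$ and expanding by Riemann--Roch on $X$, via $\chi(M^{\vee}_{\mathcal{L},V}\otimes\mathcal{L}^{t})=(n+1)\chi(\mathcal{L}^{t})-\chi(\mathcal{L}^{t-1})$, gives the relation $n\,(L\cdot K_{X})=m(2-3n)$, which for $(n,m)=(3,2)$ forces $L\cdot K_{X}=-14/3$, impossible. Hence $a=1$ yields no examples, and the proof is complete. The step I expect to be most delicate is this restriction for $a=1$: one must keep precise track of the trivial summand produced by the failure of injectivity of $V\to H^{0}(C,\mathcal{L}|_{C})$ and confirm the non-degeneracy of $C$ in the reduced projective space, so that the curve Proposition genuinely applies.
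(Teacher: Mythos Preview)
Your argument is correct, but it proceeds quite differently from the paper. The paper works entirely on the surface: from the two Ulrich vanishings it extracts the pair of Euler-characteristic identities $\chi(\mathcal{L}^{k-a})=(n+1)\chi(\mathcal{L}^{k+1-a})$ and $\chi(\mathcal{L}^{k-2a})=(n+1)\chi(\mathcal{L}^{k+1-2a})$, subtracts, and applies Riemann--Roch to obtain $g_{aL}-1=-a^{2}L^{2}+(k+1-a)L^{2}+aL^{2}/n$, whence $g_{aL}=0$ and a short Diophantine analysis yields $(k,a,n)=(0,2,2)$. You instead descend to a curve $C\in|\mathcal{L}^{a}|$, identify $(M^{\vee}_{\mathcal{L},V})|_{C}\simeq M^{\vee}_{\mathcal{L}|_{C},V}$ for $a\ge 2$ (using non-degeneracy of $C$ in $\mathbb{P}(V)$), and invoke the curve proposition, which immediately pins down $n$, $a$, $k$, and $m$; your treatment of $a=1$ via the splitting $M^{\vee}_{\mathcal{L},V}|_{C}\simeq \mathcal{O}_{C}\oplus M^{\vee}_{\mathcal{L}|_{C},\bar V}$ is also valid (in fact you could shortcut the final Riemann--Roch step by noting that the trivial summand $\mathcal{O}_{C}$ would itself have to be $(\mathcal{L}|_{C})$-Ulrich, forcing $m=1$, which already clashes with $m=2$). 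Your method has the pleasant feature of reusing the curve classification and anticipating exactly the inductive restriction argument the paper employs in dimension $\ge 3$; the paper's method is more self-contained at the surface level and avoids the bookkeeping with embedding subspaces under restriction.
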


 \begin{proof}
     If $M^{\vee}_{\mathcal{L}, V}\otimes \mathcal{L}^{k+1}$ is $\mathcal{L}^a-$Ulrich, then we have:

     \begin{align*}
         \chi(\mathcal{L}^{k-a}) &=(n+1)\chi(\mathcal{L}^{k+1-a}) \\
         \chi(\mathcal{L}^{k-2a}) &=(n+1)\chi(\mathcal{L}^{k+1-2a}).
     \end{align*}
Subtracting these two equations and using Riemann-Roch, we obtain:
$$ g_H-1 = -a^2 L^2 + (k+1-a)L^2 + \frac{aL^2}{n},$$
with $H=aL$. By Lemma \ref{Lemma for dual M}, we have $k+1<a$. Thus, the right-hand side of the equality is negative, and we conclude that $g_H=0$.

Therefore,

$$1= (a-(k+1)-\frac{1}{n})aL^2.$$

Taking into account that $2\le n$, $0\le k+1<a$, and $L^2=1$ if and only if $n=2$, or equivalently $X=\mathbb{P}^2$, it is easy to deduce that the only possible solution is $k=0$, $a=2$, and $n=2$. 

By the isomorphism (\ref{tangent isomorphisms}), we have that $M_{\mathcal{L},V}^{\vee}\otimes \mathcal{L}\simeq T\mathbb{P}^2$. Using the Euler exact sequence:

$$0\to \mathcal{O}_{\mathbb{P}^2}\to \mathcal{O}_{\mathbb{P}^2}(1)^{\oplus 3} \to T\mathbb{P}^2 \to 0   $$
we readily check that, in fact, $T\mathbb{P}^2$ is $\mathcal{O}_{\mathbb{P}^2}(2)-$Ulrich.
 \end{proof}

 Now, we are in a position to prove:

 \begin{Proposition}
  \label{dual M for varieties}
  Let $X$ be a projective variety of dimension at least $3$, $\mathcal{L}$ a very ample line bundle, and $V\subseteq H^0(\mathcal{L})$ an embedding subspace. Then, for any $k\in \mathbb{Z}$, and $a\in \mathbb{N}$, $M^{\vee}_{\mathcal{L}, V}\otimes \mathcal{L}^{k+1}$ is not $\mathcal{L}^a-$Ulrich.    
 \end{Proposition}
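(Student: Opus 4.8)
The strategy is to cut $X$ down to a surface by general hyperplane sections of the polarization $\mathcal{L}^a$ and invoke Proposition \ref{dual M for surfaces} together with the restriction property of Ulrich bundles. I argue by contradiction. Suppose $d:=\dim X\ge 3$ and that $\mathcal{E}:=M^{\vee}_{\mathcal{L},V}\otimes\mathcal{L}^{k+1}$ is $\mathcal{L}^a$-Ulrich. Choosing general members $H_1,\dots,H_{d-2}\in|aL|$ on $X$, put $S:=X\cap H_1\cap\cdots\cap H_{d-2}$; by Bertini $S$ is a smooth irreducible surface and $\mathcal{L}|_S$ is very ample. Iterating Proposition \ref{proposition properties of Ulrich} ii) (at each stage cutting a general, hence smooth irreducible, member of $|\mathcal{L}^a|$ restricted to the previous section), the bundle $\mathcal{E}|_S=M^{\vee}_{\mathcal{L},V}|_S\otimes(\mathcal{L}|_S)^{k+1}$ is $(\mathcal{L}|_S)^a$-Ulrich on $S$.

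The key step is to recognize $M^{\vee}_{\mathcal{L},V}|_S$ as a dual syzygy bundle on $S$. Restricting the exact sequence (\ref{definition of M}) to $S$ exhibits $M_{\mathcal{L},V}|_S$ as $\ker(V\otimes\mathcal{O}_S\to\mathcal{L}|_S)$, the evaluation being the one defined by the restricted sections. Setting $W:=\ker\bigl(V\to H^0(S,\mathcal{L}|_S)\bigr)$, which is killed by this evaluation, and choosing a splitting $V=W\oplus\tilde V$, we obtain
$$M^{\vee}_{\mathcal{L},V}|_S\;\simeq\;\mathcal{O}_S^{\oplus\dim W}\;\oplus\;M^{\vee}_{\mathcal{L}|_S,\tilde V},$$
where $\tilde V$ (the image of $V$ in $H^0(S,\mathcal{L}|_S)$) is an embedding subspace of $\mathcal{L}|_S$: it is base-point-free and embeds $S$ because $V$ embeds $X\supseteq S$, and $S$ is non-degenerate in $\mathbb{P}(\tilde V)$ by the very definition of $W$. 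For $a\ge 2$ one has $W=0$, since $h^0(\mathcal{L}^{1-a})=0$, while for $a=1$ the subspace $W$ is positive-dimensional and the trivial summands genuinely appear. Since a direct summand of an Ulrich bundle is Ulrich (immediate from Definition \ref{definition of Ulrich}), the summand $M^{\vee}_{\mathcal{L}|_S,\tilde V}\otimes(\mathcal{L}|_S)^{k+1}$ is $(\mathcal{L}|_S)^a$-Ulrich on the surface $S$.

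Now Proposition \ref{dual M for surfaces} applies and forces $a=2$, $k=0$, $S=\mathbb{P}^2$ and $\mathcal{L}|_S=\mathcal{O}_{\mathbb{P}^2}(1)$. In particular $(\mathcal{L}|_S)^2=1$; but, since $[S]=(aL)^{d-2}$ in $X$, the projection formula gives
$$1=(\mathcal{L}|_S)^2=L^2\cdot(aL)^{d-2}=a^{d-2}L^d=2^{d-2}L^d\ge 2^{d-2}\ge 2,$$
using $L^d=\deg X\ge 1$ and $d\ge 3$. This contradiction proves that no such $\mathcal{E}$ can be $\mathcal{L}^a$-Ulrich. (If $a\ne 2$ from the outset, the conclusion $a=2$ of Proposition \ref{dual M for surfaces} is already the contradiction.)

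I expect the delicate point to be the second paragraph: checking that restricting the syzygy sequence to the general surface section returns a dual syzygy bundle (up to trivial factors) for a genuine embedding subspace $\tilde V$, and correctly tracking the trivial summands, which appear precisely when $a=1$. The remaining ingredients — Bertini, iteration of the restriction property, and the intersection-number count — are routine.
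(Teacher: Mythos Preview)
Your proof is correct and follows the same overall strategy as the paper: restrict the bundle to a lower-dimensional section using Proposition~\ref{proposition properties of Ulrich}~ii) and invoke the surface classification (Proposition~\ref{dual M for surfaces}). The execution differs in two respects worth recording. First, the paper treats $a=1$ separately by a direct $h^0$ count (Lemma~\ref{Lemma for dual M} forces $k=-1$, and then $h^0(M^\vee_{\mathcal L,V})=n+1$ contradicts $h^0=n\deg X$), whereas you fold $a=1$ into the general argument by keeping track of the trivial summand $\mathcal{O}_S^{\oplus\dim W}$ and passing to the syzygy summand; this is a tidy unification. Second, for $a>1$ the paper inducts one step at a time, identifies the restricted bundle as $T\mathbb{P}^n(k)\vert_Y$ via (\ref{tangent isomorphisms}), checks that $Y\subset\mathbb{P}^n$ stays non-degenerate, and obtains the contradiction from $n=d-1$; you instead cut straight to a surface and derive the numerical contradiction $(\mathcal{L}\vert_S)^2=1$ against $a^{d-2}L^d\ge 2$. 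Your syzygy-restriction splitting is precisely the alternative the paper records in the Remark following Proposition~\ref{Proposition M}; your justification that $W=0$ for $a\ge2$ could be made slightly more explicit (it follows by iterating the one-step vanishing $H^0(\mathcal{L}^{1-a})=0$ along the tower of sections), but the claim is correct.
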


 \begin{proof}  

 The case $a=1$ was treated in \cite{Lopez-Collino}. We give here another argument, for the sake of completeness. By Lemma \ref{Lemma for dual M}, if $a=1$, the only possibility is $k=-1$. The exact sequence (\ref{DualMkvariedades}) becomes:

 $$0\to \mathcal{L}^{-1}\to V\otimes \mathcal{O}_X \to M_{\mathcal{L},V}^\vee \to 0.$$

 Thus, for Kodaira Vanishing $h^0(M^\vee)=n+1$
 on the other hand, if $M^\vee$ is $\mathcal{L}-$Ulrich, then $h^0(M^\vee)= \deg X .\operatorname{rank} M^\vee= \deg X. n$. This gives the contradiction $\deg X= \frac{n+1}{n}$.

Thus, we assume $a>1$ and proceed by induction on $d=\dim X$. Recall the isomorphisms (\ref{tangent isomorphisms}). The hypothesis of induction means that given $X\hookrightarrow \mathbb{P}^n$ with $\dim X=d\ge 3$, we assume that for any projective variety

$$Z\hookrightarrow \mathbb{P}^m $$
with $\dim Z= d-1$, the only possibility for 

$$T\mathbb{P}^m(k)\vert_Z \simeq M^\vee_{
\mathcal{O}_Z(1),\mathbb{P}^m} \otimes \mathcal{O}_Z(k +1)$$
being $\mathcal{O}_Z(a)-$Ulrich is $a = 2$, $k = 0$ and $m = d-1$, i.e. $Z = \mathbb{P}^{d-1}$.

In fact, we use the hypothesis of induction only in the following case:
let $F\subset \mathbb{P}^n$ be a hypersurface of degree $a$ such that $Y:=X\cap F$ is irreducible and non-singular (such $F$ exists by Bertini´s Theorem). 
If $T\mathbb{P}^n(k)\vert_X$ is Ulrich, then by Proposition \ref{proposition properties of Ulrich}ii), $T\mathbb{P}^n(k)\vert_Y$ also is. We claim that, since $a>1$, $Y\subseteq \mathbb{P}^n$ is non degenerated. In fact, if $Y=F\cap X=:F_X$ and $Y\subset H_X^{\prime}$ for some hyperplane $H^{\prime}$, then as divisors $F_X\le H^{\prime}_X$. Thus, we obtain $aH_X\sim F_X \le H_X$, that contradicts $a>1$.

Therefore:

$$(T\mathbb{P}^n(k)\vert_X)\vert_Y = T\mathbb{P}^n(k)\vert_Y;$$

but then, by induction, $Y=\mathbb{P}^{d-1}$, which is impossible. 
\end{proof}

    \section{Proof of Theorem \ref{Theorem Main} ii)}\label{Item ii)}

In this section, we conclude the proof of Theorem \ref{Theorem Main}. We proceed in broad outline as in the previous section, establishing the result first for curves, then for surfaces, and finally for higher-dimensional varieties.

The starting point is:

\begin{Lemma}\label{lemma M} Let $\mathcal{L}$ be a very ample line bundle on the projective variety $X$ and $V\subseteq H^0(\mathcal{L})$ an embedding subspace. Then, $h^0(M_{\mathcal{L},V}\otimes \mathcal{L}^k) \ne 0$ if and only if $k\ge 1$. In particular, if $M_{\mathcal{L},V}\otimes \mathcal{L}^{k-1}$ is $\mathcal{L}^a-$Ulrich, then:
$$ 0<k-1 \le a.$$
\end{Lemma}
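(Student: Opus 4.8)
The plan is to show first the cohomological vanishing/non-vanishing statement $h^0(M_{\mathcal{L},V}\otimes\mathcal L^k)\neq 0 \iff k\ge 1$, and then derive the numerical consequence for the Ulrich case. For the ``if'' direction, when $k\ge 1$ I would twist the defining sequence (\ref{definition of M}) by $\mathcal L^k$ to get
\begin{equation*}
0\to M_{\mathcal{L},V}\otimes\mathcal L^k\to V\otimes\mathcal L^k\to \mathcal L^{k+1}\to 0,
\end{equation*}
and look at the induced map on global sections $V\otimes H^0(\mathcal L^k)\to H^0(\mathcal L^{k+1})$. The kernel of this map injects into $H^0(M_{\mathcal{L},V}\otimes\mathcal L^k)$, so it suffices to check the multiplication map $V\otimes H^0(\mathcal L^k)\to H^0(\mathcal L^{k+1})$ is not injective. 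Since $\dim V\ge 2$ (as $X$ is non-degenerate in $\mathbb P^n$ with $n\ge 1$) and $h^0(\mathcal L^k)\ge h^0(\mathcal L)\ge n+1\ge 2$ for $k\ge 1$ (using that $\mathcal L$ is very ample, hence $\mathcal L^k$ has at least as many sections), a dimension count against $h^0(\mathcal L^{k+1})$ combined with the fact that two distinct sections $s_0,s_1\in V$ give the Koszul-type relation $s_0\otimes s_1 - s_1\otimes s_0 \mapsto 0$ forces a nonzero kernel. Actually the cleanest argument: $s_0\otimes s_1-s_1\otimes s_0$ always lies in the kernel and is nonzero once $\dim V\ge 2$, so $h^0(M_{\mathcal{L},V}\otimes\mathcal L^k)\ne 0$ already for $k\ge 0$; to get it for $k\ge 1$ as stated I just need this for $k\ge 1$, which is a special case.

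For the ``only if'' direction I must show $h^0(M_{\mathcal{L},V}\otimes\mathcal L^k)=0$ when $k\le 0$. Taking cohomology of the twisted sequence above, $H^0(M_{\mathcal{L},V}\otimes\mathcal L^k)$ sits inside $\ker\big(V\otimes H^0(\mathcal L^k)\to H^0(\mathcal L^{k+1})\big)$. For $k<0$ we have $H^0(\mathcal L^k)=0$ since $\mathcal L$ is ample (a negative power of an ample line bundle has no sections on a projective variety), so the kernel is $0$ and we are done. The borderline case $k=0$ needs $H^0(M_{\mathcal{L},V})=0$: here the relevant map is $V\otimes H^0(\mathcal O_X)=V\to H^0(\mathcal L)$, which is the inclusion $V\hookrightarrow H^0(\mathcal L)$ by definition of an embedding subspace, hence injective, so its kernel vanishes and $h^0(M_{\mathcal{L},V})=0$. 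This case is the only subtle point, and it is exactly where the hypothesis ``$V$ is an embedding subspace'' (so the evaluation $V\otimes\mathcal O_X\to\mathcal L$ is surjective with the section map injective) is used.

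Finally, for the Ulrich consequence: if $M_{\mathcal{L},V}\otimes\mathcal L^{k-1}$ is $\mathcal L^a$-Ulrich with $\dim X=d$, then by Definition \ref{definition of Ulrich} all cohomology of $M_{\mathcal{L},V}\otimes\mathcal L^{k-1-pa}$ vanishes for $1\le p\le d$, and by Proposition \ref{proposition properties of Ulrich}i) also $h^0(M_{\mathcal{L},V}\otimes\mathcal L^{k-1}(-j))=0$ for $1\le j\le d-1$; in particular $h^0(M_{\mathcal{L},V}\otimes\mathcal L^{k-1-a})=0$. By the first part of the lemma this forces $k-1-a\le 0$, i.e. $k-1\le a$. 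On the other hand $\mathcal E:=M_{\mathcal{L},V}\otimes\mathcal L^{k-1}$ being Ulrich has $h^0(\mathcal E)=\operatorname{rank}(\mathcal E)\cdot\deg X=n\deg X>0$, so again by the first part $k-1\ge 1$. Combining gives $0<k-1\le a$. I expect the main (and really only) obstacle to be the $k=0$ borderline case, which I will handle via the embedding-subspace hypothesis as described; everything else is a direct consequence of ampleness and the properties of Ulrich bundles already recorded.
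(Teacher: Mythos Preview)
Your argument is correct, and for the ``only if'' direction ($k\le 0$) and the Ulrich consequence it matches the paper almost verbatim. The one parenthetical slip is the remark that the Koszul relation ``already'' gives $h^0(M_{\mathcal L,V}\otimes\mathcal L^k)\ne 0$ for $k\ge 0$: this is false at $k=0$, since there the second tensor factor is $H^0(\mathcal O_X)\simeq\mathbb C$ and no antisymmetric element exists---indeed you yourself prove $h^0(M_{\mathcal L,V})=0$ a few lines later. As you note, the statement only asks for $k\ge 1$, and there your Koszul element $s_0\otimes s_1-s_1\otimes s_0\in V\otimes H^0(\mathcal L)$ (for $k=1$; then tensor by any nonzero section of $\mathcal L^{k-1}$ to handle $k\ge 2$) is a legitimate nonzero element of the kernel, so the proof stands.

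Where you genuinely diverge from the paper is in this ``if'' direction. The paper does not use the Koszul relation at all. Instead it argues that, since $M_{\mathcal L,V}^\vee$ is globally generated of rank $n$ with at least $n+1$ sections, one can choose $n-1$ sections to build a quotient $M_{\mathcal L,V}^\vee\twoheadrightarrow\widetilde{\mathcal F}$ of rank one whose singular locus has codimension $\ge 2$; after killing torsion and dualizing, this produces an injection $\mathcal L^{-1}\hookrightarrow M_{\mathcal L,V}$, hence a section of $M_{\mathcal L,V}\otimes\mathcal L$. This route is considerably more technical (it invokes facts about torsion subsheaves, reflexive rank-one sheaves, and determinants in codimension $\ge 2$, with references to Br\^{\i}nz\u{a}nescu, Friedman, and Kobayashi), but it yields a bit more: an explicit line subbundle $\mathcal L^{-1}\subset M_{\mathcal L,V}$, not merely a nonzero section of the twist. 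Your Koszul argument is more elementary and entirely sufficient for the lemma as stated; the paper's construction would become relevant only if one later needed the specific subsheaf $\mathcal L^{-1}$ rather than just the non-vanishing of $h^0$.
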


\begin{proof} The proof of this Lemma is rather technical; to complete the details, the reader can consult \cite{Brinzanescu}, Chapter 1, \cite{Friedman}, and \cite{Kobayashi}, Chapter 5. If $k\le 0$, then $h^0(M_{\mathcal{L}, V}\otimes \mathcal{L}^k)=0$, because $h^0(M_{\mathcal{L}, V})=0$. Indeed, the exact sequence:

\begin{equation}\label{M} 0\to M_{\mathcal{L}, V}\to V\otimes \mathcal{O}_X \to \mathcal{L} \to 0,\end{equation}
induces an inclusion $H^0(V)\hookrightarrow H^0(\mathcal{L}).$

Thus, to prove the Lemma, it is sufficient to see that $h^0(M_{\mathcal{L}, V}\otimes \mathcal{L})\ne 0$. From the dual sequence of (\ref{M}), we see that $h^0(M_{\mathcal{L}, V}^\vee) \ge n+1$ and $rank (M_{\mathcal{L}, V}^\vee)=n$.

Using $n-1$ sections of $M_{\mathcal{L}, V}^\vee$, we can  construct an exact sequence:

    $$0 \to  \mathcal{O}_X^{n-1} \overset {ev}{\to} M_{\mathcal{L}, V}^\vee \to \widetilde{\mathcal{F}}\to 0,$$
   where $ev$ is the evaluation map, and the set of singular points of $\widetilde{\mathcal{F}}$ has codimension two. Moreover, $\det \widetilde{\mathcal{F}}= \det M_{\mathcal{L}, V}^\vee= \mathcal{L}$.
   
    Let  $\tau (\widetilde{F})$ be the torsion subsheaf of $\widetilde{\mathcal{F}}$. Since $\operatorname {Supp} \tau(\widetilde{F})\subset  \text{Sing}(\widetilde{\mathcal{F}})$, it follows that $\operatorname {Supp} \tau (\widetilde{F})$ has codimension at least two  and, therefore,  $\text{det}(\tau(\widetilde{F}))=\mathcal{O}_X$. We have an exact sequence:

    $$0\to \tau(\widetilde{\mathcal{F}})\to \widetilde{\mathcal{F}} \to \mathcal{F}\to 0,$$
with $\mathcal{F}$ torsion free and $\det \mathcal{F}= \det \widetilde{\mathcal{F}}=\mathcal{L}$.
   
Thus a diagram

$$\xymatrix{ 0\ar[r]& \mathcal{O}_X^{n-1} \ar[r] \ar[d]& M_{\mathcal{L}, V}^\vee \ar[r] \ar[d]^{id} &\widetilde{\mathcal{F}} \ar[r] \ar[d] & 0 \\ 0 \ar[r] & \mathcal{G} \ar[r] & M_{\mathcal{L}, V}^\vee \ar[r] & \mathcal{F} \ar[r]\ar[d] & 0\\ & & & 0}
$$
exists. From this, we obtain the exact sequence:

\begin{equation}\label{sequence F}0\to \mathcal{F}^\vee \to M_{\mathcal{L}, V} \to \mathcal{G}^{\vee}.\end{equation}
Now, $\mathcal{F}^\vee$ is reflexive and of rank one, thus invertible; and, since $\mathcal{F}$ is torsion free:

$$\det \mathcal{F}=\mathcal{L}= (\mathcal{F}^\vee)^\vee.$$

Thus, $\mathcal{L}= (\mathcal{F}^\vee)^{-1}$, or $\mathcal{F}^\vee =\mathcal{L}^{-1}$. It follows from the exact sequence (\ref{sequence F}) that $M_{\mathcal{L}, V}\otimes \mathcal{L}$ has a section.

The last statement follows from the fact that if $M_{\mathcal{L},V}\otimes \mathcal{L}^{k-1}$ is $\mathcal{L}^a-$Ulrich, then $h^0(M_{\mathcal{L},V}\otimes \mathcal{L}^{k-1})\ne 0$ and $h^0(M_{\mathcal{L},V}\otimes \mathcal{L}^{k-1-a})=0$.
\end{proof}

\begin{Proposition}\label{Proposition M for curves}
    Let $X$ be a projective curve of genus $g$, $L$ an ample line bundle, and $V\subseteq H^0(\mathcal{L})$ an embedding subspace. Then $M_{\mathcal{L},V}\otimes \mathcal{L}^{k-1}$ is $\mathcal{L}^a$-Ulrich if and only if $a=k-1$ and  $X$ is a normal rational curve.
    \end{Proposition}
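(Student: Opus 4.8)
The plan is to use that $\dim X=1$, so the Ulrich condition for $\mathcal{E}:=M_{\mathcal{L},V}\otimes\mathcal{L}^{k-1}$ with respect to $H=aL$ collapses to the single requirement $h^0(\mathcal{E}(-aL))=h^1(\mathcal{E}(-aL))=0$, equivalently $\chi(\mathcal{E}(-aL))=0$ together with the vanishing of, say, $h^0$. Since $\mathcal{E}(-aL)=M_{\mathcal{L},V}\otimes\mathcal{L}^{k-1-a}$, I would first record from the defining sequence (\ref{M}) that $\operatorname{rank}M_{\mathcal{L},V}=n$ and $\deg M_{\mathcal{L},V}=-\deg\mathcal{L}$, and from Lemma \ref{lemma M} that being Ulrich already forces $0<k-1\le a$, in particular $a-(k-1)\ge 0$.

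Next I would impose $\chi(\mathcal{E}(-aL))=0$ and expand it by Riemann--Roch on $X$. Writing $m=\deg\mathcal{L}\ge 1$ and $g$ for the genus, a direct computation gives the numerical identity
\[
m\bigl(n(a-k+1)+1\bigr)=n(1-g).
\]
Because $m\ge 1$ and $n(a-k+1)+1\ge 1$ (this is exactly where $a\ge k-1$ enters), the left-hand side is strictly positive, so $g=0$; thus $X$ is a smooth rational curve, i.e. $X\simeq\mathbb{P}^1$. Substituting $g=0$ back yields $m\bigl(n(a-k+1)+1\bigr)=n$, and for $m$ to be a positive integer one needs $n(a-k+1)+1\le n$, forcing $a-k+1\le 0$; combined with $a\ge k-1$ this gives $a=k-1$ and then $m=n$. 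Hence $\mathcal{L}=\mathcal{O}_{\mathbb{P}^1}(n)$, and since $\dim V=n+1=h^0(\mathcal{O}_{\mathbb{P}^1}(n))$ we get $V=H^0(\mathcal{L})$, so $X\hookrightarrow\mathbb{P}^n$ is the rational normal curve of degree $n$.

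For the converse I would check directly that these data give an Ulrich bundle. On $\mathbb{P}^1$ every vector bundle splits; since $M_{\mathcal{L},V}$ is a subbundle of the trivial bundle, $M_{\mathcal{L},V}^\vee$ is globally generated, of rank $n$ and degree $n$, while $h^0(M_{\mathcal{L},V})=0$ by (\ref{M}), so every summand of $M_{\mathcal{L},V}^\vee$ must be $\mathcal{O}_{\mathbb{P}^1}(1)$; that is, $M_{\mathcal{L},V}\simeq\mathcal{O}_{\mathbb{P}^1}(-1)^{\oplus n}$. With $a=k-1$ one has $\mathcal{E}(-aL)=M_{\mathcal{L},V}=\mathcal{O}_{\mathbb{P}^1}(-1)^{\oplus n}$, whose $H^0$ and $H^1$ both vanish, so $\mathcal{E}$ is $\mathcal{L}^{k-1}$-Ulrich.

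The only genuinely delicate point is the interaction between the Euler-characteristic equation and Lemma \ref{lemma M}: the Lemma is precisely what pins down the sign of the left-hand side of the displayed identity and thereby forces $g=0$. Everything after that is the arithmetic of $m(n(a-k+1)+1)=n$ together with elementary bundle theory on $\mathbb{P}^1$, so no serious obstacle remains.
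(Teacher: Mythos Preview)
Your proof is correct and follows essentially the same route as the paper: use Lemma \ref{lemma M} to bound $k-1\le a$, impose $\chi(M_{\mathcal{L},V}\otimes\mathcal{L}^{k-1-a})=0$ via Riemann--Roch to obtain $m\bigl(n(a-k+1)+1\bigr)=n(1-g)$, deduce $g=0$ and $a=k-1$, and then verify directly that $M_{\mathcal{L},V}\simeq\mathcal{O}_{\mathbb{P}^1}(-1)^{\oplus n}$ is Ulrich. The only cosmetic difference is that the paper splits into the three cases $k-1=a$, $k=a$, $k<a$ and treats the first via the long exact cohomology sequence of (\ref{Mka}), whereas you handle all three at once with the single numerical identity; your organization is slightly cleaner but the content is the same.
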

\begin{proof}  We have the exact sequence 
\begin{equation}\label{Mka}
0\to M_{\mathcal{L}, V} \otimes \mathcal{L}^{k-1-a}\to V\otimes \mathcal{L}^{k-1-a}\to \mathcal{L}^{k-a}\to 0
\end{equation}
    We consider the following three cases:
    \begin{enumerate}
        \item[(a)] $k-1=a:$  From the exact sequence (\ref{Mka}), $M_{\mathcal{L}, V}\otimes \mathcal{L}^{k-1}$ is $\mathcal{L}^a$-Ulrich if and only if it satisfies the following properties: 
      \begin{itemize}
          \item $V=H^0(\mathcal{L})$
          \item $h^1(\mathcal{L})=(n+1)g.$
      \end{itemize}
Therefore, we obtain from Riemann-Roch that  $X=\mathbb{P}^1$, $\mathcal{L}=\mathcal{O}_{\mathbb{P}^1}(n)$ and $V=H^0(X,\mathcal{L}).$ In this case $M_{\mathcal{L}, V}=\oplus \mathcal{O}_{\mathbb{P}^1}(-1)$. Therefore, $M_{\mathcal{L}, V}\otimes \mathcal{L}^{k-1}=\oplus \mathcal{O}_{\mathbb{P}^1}(a-1),$ which is $\mathcal{L}^a$-Ulrich.

 \item[(b)] $k=a$: From the exact sequence (\ref{Mka}), we get $H^1(M_{\mathcal{L}, V}\otimes \mathcal{L}^{k-1-a})\neq 0.$ Therefore, $M_{\mathcal{L}, V}\otimes \mathcal{L}^{k-1}$ is not $\mathcal{L}^a$-Ulrich.

        \item[(c)] $k<a:$ From the exact sequence (\ref{Mka}), $M_{\mathcal{L}, V}\otimes \mathcal{L}^{k-1}$ is $\mathcal{L}^a$-Ulrich if and only if
        \begin{equation*}
            (n+1)\chi(\mathcal{L}^{k-1-a})=\chi(\mathcal{L}^{k-a}),
        \end{equation*}
        which, by Riemann–Roch, is equivalent to
        \begin{equation*}
            m\left(n(a+1-k)+1\right)=n(1-g),
        \end{equation*}
        were $m=\deg L$.
        Since, by hypothesis, the left-hand side is positive, it follows that $X=\mathbb{P}^1$ and 
        \begin{equation*}
            m=\frac{n}{n(a+1-k)+1}.
        \end{equation*}
        which is impossible because $m$ is a natural number.
     \end{enumerate}
\end{proof}

Finally, we have:

\begin{Proposition}\label{Proposition M} Let $X\subseteq \mathbb{P}^n$ be a projective variety of dimension $d\ge 2$, $\mathcal{L}$ a very ample line bundle and $V\subseteq H^0(\mathcal{L})$ an embedding subspace. Then, for every $k\in \mathbb{Z}$ and $a\in \mathbb{N}$, the sheaf $M_{\mathcal{L},V}\otimes \mathcal{L}^{k-1}$ is not $\mathcal{L}^a-$Ulrich.
\end{Proposition}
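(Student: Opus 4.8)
The plan is to mimic the structure of the proof of Proposition \ref{dual M for varieties}, combining a Euler-characteristic obstruction for the base case of the induction with a hyperplane-section/Bertini argument to reduce dimension. By the isomorphism (\ref{cotangent ismorphisms}) we are asking when $\Omega\mathbb{P}^n(k)\vert_X$ is $\mathcal{L}^a$-Ulrich. First I would dispose of the case $a=1$ by the argument already referenced to \cite{Lopez-Collino}, or reprove it directly: by Lemma \ref{lemma M}, $0 < k-1 \le 1$, so $k=2$, and then the exact sequence (\ref{Mka}) with $a=1$, $k=2$ reads $0\to M_{\mathcal{L},V}\to V\otimes\mathcal{O}_X\to\mathcal{L}\to 0$; taking cohomology, $h^1(M_{\mathcal{L},V})=0$ forces $H^0(V)\to H^0(\mathcal{L})$ onto and $h^1(\mathcal{L})=\dots$, and then the Ulrich condition $h^0(M_{\mathcal{L},V})=\deg X\cdot n$ contradicts $h^0(M_{\mathcal{L},V})=0$ (from (\ref{M})). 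So we may assume $a\ge 2$ and $2\le k-1\le a$.

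Next I would extract the numerical obstruction from the vanishing of all cohomology of $M_{\mathcal{L},V}\otimes\mathcal{L}^{k-1-ja}$ for $j=1,\dots,d$. Using the exact sequence (\ref{Mka}) twisted appropriately — namely $0\to M_{\mathcal{L},V}\otimes\mathcal{L}^{k-1-ja}\to V\otimes\mathcal{L}^{k-1-ja}\to\mathcal{L}^{k-ja}\to 0$ — the Ulrich hypothesis gives $\chi(\mathcal{L}^{k-ja})=(n+1)\chi(\mathcal{L}^{k-1-ja})$ for each $j$ in that range. For $d\ge 2$ we subtract the $j=1$ and $j=2$ relations and apply Riemann–Roch on $X$ (expanding $\chi(\mathcal{L}^t)$ as a degree-$d$ polynomial in $t$), exactly as in the proof of Proposition \ref{dual M for surfaces}; the leading terms cancel and one is left with an inequality. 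Combined with Lemma \ref{lemma M} (so $k-1-a\le 0$, hence the relevant twists are non-positive) and with $n\ge d+1\ge 3$ together with $\deg X = L^d\ge 1$ with equality iff $X=\mathbb{P}^d$, I expect to conclude that no solution exists already in dimension $2$, and that the surface case is the base of the induction.

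Then, for $d\ge 3$, I would run the induction on $d$ exactly as in Proposition \ref{dual M for varieties}: choose by Bertini a degree-$a$ hypersurface $F\subset\mathbb{P}^n$ with $Y:=X\cap F$ smooth and irreducible, note $Y$ is non-degenerate in $\mathbb{P}^n$ since $a>1$ (the divisor inequality $aH_X\sim F_X\le H_X$ would contradict $a>1$ if $Y$ lay in a hyperplane), so that $(\Omega\mathbb{P}^n(k)\vert_X)\vert_Y=\Omega\mathbb{P}^n(k)\vert_Y$; by Proposition \ref{proposition properties of Ulrich} ii) this restriction is $\mathcal{L}\vert_Y^{\,a}$-Ulrich, and the inductive hypothesis (whose base case is the surface/curve statements, which give no solutions) yields a contradiction. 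The main obstacle is the base case: I must make sure the Riemann–Roch bookkeeping for surfaces genuinely closes off all of $2\le k-1\le a$ (one has to be slightly careful that the subtracted relation only uses twists $\le 0$, and that the $1/n$ term coming from $\chi(V\otimes\mathcal{L}^t)=(n+1)\chi(\mathcal{L}^t)$ does not open a loophole when $X=\mathbb{P}^2$), since unlike the dual case the inequalities here ought to be strict and thus kill even the $\mathbb{P}^2$ candidate.
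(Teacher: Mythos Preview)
Your overall architecture---dispose of $a=1$, settle the surface case as the base, then induct on $d$ via a Bertini section of degree $a$ using the identification $M_{\mathcal{L},V}\otimes\mathcal{L}^{k-1}\simeq\Omega\mathbb{P}^n(k)\vert_X$---matches the paper's exactly, and the inductive step is the same argument verbatim.

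Two points diverge. Your direct $a=1$ argument is garbled: the Ulrich relation $h^0(\mathcal{E})=n\deg X$ applies to $\mathcal{E}=M_{\mathcal{L},V}\otimes\mathcal{L}$, not to $M_{\mathcal{L},V}$, so there is no contradiction with $h^0(M_{\mathcal{L},V})=0$. The paper instead twists (\ref{exact cotangent sequence}) by $\mathcal{L}^{-2}$ and reads off $H^0(\mathcal{O}_X)\hookrightarrow H^1(M_{\mathcal{L},V}\otimes\mathcal{L}^{-1})$, contradicting the Ulrich vanishing at $p=2$ (which is where $d\ge 2$ enters). For the surface base the paper does \emph{not} use your Euler-characteristic subtraction; it applies Serre duality to the single relation $(n+1)h^2(\mathcal{L}^{k-1-2a})=h^2(\mathcal{L}^{k-2a})$ to force $|(2a+1-k)L+K_X|=\emptyset$, and then invokes Reider's theorem.

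Your worry about a $\mathbb{P}^2$ loophole is, however, exactly right and is fatal to \emph{both} approaches. Your two relations $(n+1)\chi(\mathcal{L}^{k-1-ja})=\chi(\mathcal{L}^{k-ja})$ for $j=1,2$ are genuinely satisfied by $X=\mathbb{P}^2$, $\mathcal{L}=\mathcal{O}(1)$, $n=2$, $k=3$, $a=2$, so no amount of Riemann--Roch bookkeeping will exclude that case; and the paper's Reider step needs $((2a+1-k)L)^2\ge 5$, which fails there since $(2L)^2=4$. In fact $\Omega_{\mathbb{P}^2}(3)$ \emph{is} $\mathcal{O}_{\mathbb{P}^2}(2)$-Ulrich: by Bott one has $H^i(\Omega_{\mathbb{P}^2}(1))=H^i(\Omega_{\mathbb{P}^2}(-1))=0$ for all $i$, or equivalently it is the Ulrich dual $T\mathbb{P}^2{}^\vee\otimes\mathcal{O}(3\cdot 2-3)$ of the $\mathcal{O}(2)$-Ulrich bundle $T\mathbb{P}^2$. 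So the proposition as stated has a counterexample, and neither your plan nor the paper's argument can close the surface base case without amending the statement to exclude $(X,\mathcal{L},k,a)=(\mathbb{P}^2,\mathcal{O}(1),3,2)$.
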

\begin{proof} We have the exact sequence:

\begin{equation}\label{exact cotangent sequence}
    0\to M_{\mathcal{L},V}\otimes \mathcal{L}^{k-1}\to V\otimes \mathcal{L}^{k-1} \to \mathcal{L}^k \to 0.
    \end{equation}
    
The case $a=1$ was considered in \cite{Lopez-Collino}. Once again, we give a different argument here. 

If $a=1$, then by Lemma \ref{lemma M} $k=2$. In this case (\ref{exact cotangent sequence}) gives:

$$0\to M_{\mathcal{L},V}\otimes \mathcal{L} \to V\otimes \mathcal{L} \to \mathcal{L}^2\to 0.$$
Twisting by $\mathcal{L}^{-2}$, we obtain:

$$0\to H^0(\mathcal{O}_X) \to H^1(M_{\mathcal{L},V}\otimes \mathcal{L}\otimes \mathcal{L}^{-2}).$$

Thus, $H^1(M_{\mathcal{L},V}\otimes \mathcal{L}\otimes \mathcal{L}^{-2})\ne 0$ and $M_{\mathcal{L},V}\otimes \mathcal{L}$ cannot be $\mathcal{L}-$Ulrich.

Therefore, assume in the sequel that $a>1$.

Assume first that $\dim X=2$. From the exact sequence (\ref{exact cotangent sequence}) we see that 

$$(n+1)h^2(\mathcal{L}^{k-1-2a})=h^2(\mathcal{L}^{k-2a}),$$
or, by duality,
$$(n+1)h^0(\mathcal{L}^{2a+1-k}(K_X))=h^0(\mathcal{L}^{2a-k}(K_X)).$$

But, since  $k\le 2a$, $h^0(\mathcal{L}^{2a-k}(K_X))\le h^0(\mathcal{L}^{2a+1-k}(K_X))$. Thus,
    $$\vert (2a+1-k)L+K_X\vert = \emptyset .$$

On the other hand $((2a+1-k)L)^2\ge (a+2)^2L^2\ge 9,$
and we are in a position to apply Reider´s Theorem (see \cite{Friedman}, Chapter 9, \cite{Reider}), which in our case assures that $\forall p\in X$, there exists an effective non-zero divisor $E$ containing $p$, such that $E.(2a+1-k)L$ is equal either to $0$ or $1$. The former case is impossible, because $L$ is ample, and the latter because $2a+1-k>1$. Thus, the theorem is valid for surfaces.

If $\dim X\ge 3$, we use, once again, induction on $d=\dim X$. Recall that if $X\subseteq \mathbb{P}(V)$ is non degenerated, then $M_{\mathcal{L},V}\otimes \mathcal{L}^{k-1}\simeq \Omega \mathbb{P}^n(k)\vert_X$. Therefore our induction hypothesis is: if $Z\subseteq \mathbb{P}^m$ is no degenerated and $\dim Z=d-1$, then $\Omega \mathbb{P}^m(k)\vert_Z$ is not $\mathcal{O}_{\mathbb{P}^n}(a)-$Ulrich.
    
Let $Y:=X\cap F$ with $F\in \mathbb{P}H^0(\mathcal{O}_{\mathbb{P}^n}(a))$ general. If $\Omega \mathbb{P}^n(k)\vert_X$ is $\mathcal{O}_X(a)-$Ulrich, then $\Omega \mathbb{P}^n(k)\vert_Y$ is $\mathcal{O}_Y(a)-$Ulrich.
Since $a>1$, $Y\subseteq \mathbb{P}^n$ is non-degenerate (see the proof of \ref{dual M for varieties}), and we obtain a contradiction with the induction hypothesis.
\end{proof}

\begin{Remark}
    The inductive step in the proof of Proposition \ref{Proposition M} can also be justified as follows: if $a>1$, then $H^0(\mathcal{L}(-a))=H^1(\mathcal{L}(-a))=0$, thus $H^0(\mathcal{L})\simeq H^0(\mathcal{L}\vert_Y)$. Looking at the diagram:

    $$\xymatrix{ 0\ar[r]& M_{\mathcal{L},V} \ar[r] \ar[d]& V\otimes \mathcal{O}_X \ar[r] \ar[d]  &\mathcal{L} \ar[r] \ar[d] & 0 \\ 0 \ar[r] & M_{\mathcal{L}\vert_Y, V} \ar[r]\ar[d] & V\otimes \mathcal{O}_Y \ar[r] \ar[d] & \mathcal{L}\vert_Y \ar[r]\ar[d] & 0\\ &  0 & 0 & 0} $$
    with vertical arrows determined by restrictions, one readily deduced that $(M_{\mathcal{L},V})\vert_Y \simeq M_{\mathcal{L}\vert_Y, V}$. Thus, induction can be performed directly using the syzygy bundles. Taking duals a similar argument works for the proof of Proposition \ref{dual M for varieties}.
\end{Remark}

    \section{Ulrichness with respect to an arbitrary polarization}\label{section arbitrary H}

    Hitherto, we have studied Ulrichness of twisted syzygies bundles with respect to $\mathcal{L}^a$. It is natural to ask about Ulrichness with respect to any polarization. Some previously proven results can be generalized to the case of $H$-Ulrichnesss, for $H$ any very ample divisor. For instance:

    \begin{Lemma}\label{Lemma emptiness} Let $X$ be a projective variety of dimension at least $2$, $L$, $H$ very ample divisors and $V\subseteq H^0(L)$ an embedding subspace. If $M^{\vee}_{\mathcal{L},V}\otimes \mathcal{L}^{k+1}$ is $H-$Ulrich, then $k\ge -1$ and:

$$\vert (k+1)L-H\vert =\emptyset .$$ 
\end{Lemma}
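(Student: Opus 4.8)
The plan is to mimic the structure of Lemma \ref{Lemma for dual M}, replacing the polarization $\mathcal{L}^a$ by the arbitrary very ample $H$ wherever that earlier argument used vanishing that does not actually depend on the specific twist. First I would recall that $M^{\vee}_{\mathcal{L},V}\otimes\mathcal{L}^{k+1}$ being $H$-Ulrich forces, in particular, $H^0\bigl(M^{\vee}_{\mathcal{L},V}\otimes\mathcal{L}^{k+1}(-H)\bigr)=0$. So it suffices to compute exactly when $h^0\bigl(M^{\vee}_{\mathcal{L},V}\otimes\mathcal{L}^{k+1}(-H)\bigr)=0$ and, separately, to locate the constraint $k\ge -1$ from the nonvanishing side.

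The key computational tool is the twist of the defining sequence (\ref{definition of M dual}): tensoring with $\mathcal{L}^{k+1}(-H)$ gives
\begin{equation*}
0\to \mathcal{L}^{k}(-H)\to V\otimes\mathcal{L}^{k+1}(-H)\to M^{\vee}_{\mathcal{L},V}\otimes\mathcal{L}^{k+1}(-H)\to 0.
\end{equation*}
For the nonvanishing direction, I would argue that if $k\ge -1$ then $h^0\bigl(M^{\vee}_{\mathcal{L},V}\otimes\mathcal{L}^{k+1}\bigr)\neq 0$ already because $M^{\vee}_{\mathcal{L},V}$ is globally generated (so $h^0(M^{\vee}_{\mathcal{L},V})\neq 0$, and tensoring by the effective $\mathcal{L}^{k+1}$ preserves a section); this is exactly the claim used in Lemma \ref{Lemma for dual M}. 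Conversely, if $k\le -2$, I want $h^0\bigl(M^{\vee}_{\mathcal{L},V}\otimes\mathcal{L}^{k+1}\bigr)=0$: since $\dim X\ge 2$, Kodaira vanishing applied to the twist of (\ref{definition of M dual}) by $\mathcal{L}^{k+1}$ (as in (\ref{DualMkvariedades})) gives $h^0(\mathcal{L}^{k})=0$ for $k\le -1$ and $h^1(\mathcal{L}^k)=0$ for $k\le -1$, hence $h^0\bigl(M^{\vee}_{\mathcal{L},V}\otimes\mathcal{L}^{k+1}\bigr)=0$ whenever $k+1\le -1$. Feeding this into the Ulrich condition $h^0\bigl(M^{\vee}_{\mathcal{L},V}\otimes\mathcal{L}^{k+1}(-H)\bigr)=0$ read off from the sequence above, combined with the nonvanishing input, pins down $k\ge -1$.

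For the emptiness statement, I would look at the long exact sequence in cohomology of the displayed twisted sequence. Since $M^{\vee}_{\mathcal{L},V}\otimes\mathcal{L}^{k+1}$ is $H$-Ulrich we have $H^0\bigl(M^{\vee}_{\mathcal{L},V}\otimes\mathcal{L}^{k+1}(-H)\bigr)=0$; the relevant fragment is
\begin{equation*}
0\to H^0\bigl(\mathcal{L}^{k}(-H)\bigr)\to V\otimes H^0\bigl(\mathcal{L}^{k+1}(-H)\bigr)\to H^0\bigl(M^{\vee}_{\mathcal{L},V}\otimes\mathcal{L}^{k+1}(-H)\bigr)=0.
\end{equation*}
Hence $V\otimes H^0\bigl(\mathcal{L}^{k+1}(-H)\bigr)\hookrightarrow H^0\bigl(\mathcal{L}^{k}(-H)\bigr)$ must be zero — wait, more precisely the map $V\otimes H^0(\mathcal{L}^{k+1}(-H))\to H^0(\mathcal{L}^{k+1}(-H))$ (cup product with sections) is forced to be injective onto... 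Let me instead use the cleaner route: the surjection $V\otimes\mathcal{L}^{k+1}(-H)\twoheadrightarrow M^{\vee}_{\mathcal{L},V}\otimes\mathcal{L}^{k+1}(-H)$ on $H^0$ shows that if $H^0(\mathcal{L}^{k+1}(-H))\neq 0$ then, picking $0\neq s\in H^0(\mathcal{L}^{k+1}(-H))$ and any $v\in V$ with $v\cdot s\neq 0$ in $H^0(\mathcal{L}^{k+2}(-H))$, one produces a nonzero global section of $M^{\vee}_{\mathcal{L},V}\otimes\mathcal{L}^{k+1}(-H)$ unless $s$ maps into the image of $\mathcal{L}^{k}(-H)$ for every $v$; tracking this through the sequence (essentially: $H^0(M^{\vee}(-H))=0$ forces the evaluation $V\otimes\mathcal{O}_X\to M^{\vee}$ to have no sections surviving the twist by $\mathcal{L}^{k+1}(-H)$, which forces $H^0(\mathcal{L}^{k+1}(-H))=0$, i.e. $|(k+1)L-H|=\emptyset$).

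The main obstacle I anticipate is making the emptiness deduction fully rigorous: one must be careful that a nonzero $s\in H^0((k+1)L-H)$ genuinely lifts to a nonzero section of $M^{\vee}_{\mathcal{L},V}\otimes\mathcal{L}^{k+1}(-H)$ and is not swallowed by the image of $H^0(\mathcal{L}^{k}(-H))$. The clean way is: since $M^{\vee}_{\mathcal{L},V}$ is globally generated and its twist $M^{\vee}_{\mathcal{L},V}\otimes\mathcal{L}^{k+1}(-H)$ is a quotient of $V\otimes\mathcal{L}^{k+1}(-H)$, any section of $\mathcal{L}^{k+1}(-H)$ multiplied into $V$ surjects onto local generators of $M^{\vee}_{\mathcal{L},V}\otimes\mathcal{L}^{k+1}(-H)$ on the (nonempty) open set where it is nonzero, so the composite $H^0(\mathcal{L}^{k+1}(-H))\otimes V\to H^0(M^{\vee}_{\mathcal{L},V}\otimes\mathcal{L}^{k+1}(-H))$ cannot be identically zero when $H^0(\mathcal{L}^{k+1}(-H))\neq 0$; combined with the vanishing it gives $H^0(\mathcal{L}^{k+1}(-H))=0$ as desired. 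All other steps (Kodaira vanishing, global generation of $M^{\vee}$, exactness of the twisted sequence) are routine and already invoked earlier in the paper.
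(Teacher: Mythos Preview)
Your argument is correct. The first half (pinning down $k\ge -1$ via Kodaira vanishing on the sequence (\ref{DualMkvariedades}) together with the fact that an Ulrich bundle has $h^0\neq 0$) is exactly what the paper does. For the emptiness of $\lvert (k+1)L-H\rvert$, however, you take a different route. The paper observes that the vanishing $H^0(M^{\vee}_{\mathcal{L},V}\otimes\mathcal{L}^{k+1}(-H))=0$ forces, via the twisted sequence, the equality $h^0(\mathcal{L}^{k}(-H))=(n+1)\,h^0(\mathcal{L}^{k+1}(-H))$, and then combines this with the trivial inequality $h^0(\mathcal{L}^{k+1}(-H))\ge h^0(\mathcal{L}^{k}(-H))$ (coming from multiplication by a section of the very ample $\mathcal{L}$) to conclude $h^0(\mathcal{L}^{k+1}(-H))=0$. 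Your approach instead argues directly that a nonzero $s\in H^0(\mathcal{L}^{k+1}(-H))$ would produce a nonzero section of $M^{\vee}_{\mathcal{L},V}\otimes\mathcal{L}^{k+1}(-H)$, since at any point $x$ with $s(x)\neq 0$ the subspace $V\otimes s(x)$ spans the whole $(n+1)$-dimensional fiber of $V\otimes\mathcal{L}^{k+1}(-H)$ and hence surjects onto the nonzero fiber of $M^{\vee}_{\mathcal{L},V}\otimes\mathcal{L}^{k+1}(-H)$. Both arguments are short; the paper's numerical trick is slightly slicker, while your fiberwise argument has the minor advantage of not needing the case distinction $k\ge 0$ (and making transparent that only the single vanishing $H^0(\mathcal{E}(-H))=0$ is being used). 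Your exposition would benefit from pruning the false starts and stating clearly that ``Ulrich $\Rightarrow h^0(\mathcal{E})\neq 0$'' is the input that excludes $k\le -2$.
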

\begin{proof}
For any $k\in \mathbb{Z},$ we have the exact sequence
\begin{equation}\label{DualMkvariedades}
    0\to \mathcal{L}^{k}\to V\otimes \mathcal{L}^{k+1}\to M_{\mathcal{L}, V}^{\vee}\otimes \mathcal{L}^{k+1}\to 0.
\end{equation}

If $k\leq -2,$ by Kodaira Vanishing Theorem and from the exact sequence (\ref{DualMkvariedades}), we have $h^0(M_{\mathcal{L}, V}^{\vee}\otimes \mathcal{L}^{k+1})=0$. Therefore, if $k\leq -2$ then $M_{\mathcal{L}, V}^{\vee}\otimes \mathcal{L}^{k+1}$ is not $H$-Ulrich.
    
    If $k\ge 0$, from the hypotheses on Ulrichness, we have:

    $$h^0(\mathcal{L}^{k+1}(-H))\ge (h^0(\mathcal{L}^k(-H))=(n+1)h^0(\mathcal{L}^{k+1}(-H)).$$

Thus, we must have $h^0(\mathcal{L}^{k+1}(-H))=0$ .  
\end{proof}

Now, we restrict ourselves to the case of surfaces. We keep the previous notation for the remainder of this section: $\mathcal{L}$ will be a very ample line bundle, $V\subseteq H^0(\mathcal{L})$ an embedding subspace of dimension $n+1$. A similar argument to the one used in the proof of Proposition \ref{dual M for surfaces} can be developed.

\begin{Proposition}\label{Proposition conditions on L and H} With the previous notation, assume that $X$ is a projective surface and $H$ is a very ample divisor.

\begin{itemize} \item[i)] if $M_{\mathcal{L}, V}^{\vee}\otimes \mathcal{L}^{k+1}$ is $H-$Ul\-rich, then 

$$H.K_X= 2(k+1)L.H-3H^2 + 2\frac{L.H}{n},$$ or equivalently:

$$g_H-1= (k+1)L.H-H^2 + \frac{L.H}{n}.$$ 

\item[ii)] if $M_{\mathcal{L}, V}\otimes \mathcal{L}^{k-1}$ is $H-$Ulrich, then 

$$H.K_X=2(k-1)L.H -3H^2 -2 \frac{L.H}{n},$$
or equivalently

$$g_H-1= (k-1)L.H -H^2 -\frac{L.H}{n}.$$
\end{itemize}
\end{Proposition}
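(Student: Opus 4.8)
The plan is to mimic the computation carried out in the proof of Proposition \ref{dual M for surfaces}, but now keeping $H$ and $L$ as independent divisor classes instead of specializing $H=aL$. For part i), suppose $M_{\mathcal{L},V}^{\vee}\otimes \mathcal{L}^{k+1}$ is $H$-Ulrich. By Definition \ref{definition of Ulrich}, with $d=\dim X=2$, this gives the vanishing of $H^i\big(M_{\mathcal{L},V}^{\vee}\otimes \mathcal{L}^{k+1}(-pH)\big)$ for all $i$ and $p=1,2$. Tensoring the defining exact sequence
\begin{equation*}
0\to \mathcal{L}^{k}\to V\otimes \mathcal{L}^{k+1}\to M_{\mathcal{L},V}^{\vee}\otimes \mathcal{L}^{k+1}\to 0
\end{equation*}
with $\mathcal{O}_X(-pH)$ and using that all three cohomology groups of the middle and right terms with the relevant twists vanish (the middle term is $V\otimes\mathcal{L}^{k+1}(-pH)$, and its cohomology is killed exactly when the right term is Ulrich, after accounting for the left term), I get, from the long exact sequence in cohomology, the Euler-characteristic identities
\begin{equation*}
\chi\big(\mathcal{L}^{k+1}(-pH)\big)=0 \quad\text{for }p=1,2, \qquad \text{equivalently}\qquad \chi\big(\mathcal{L}^{k}(-pH)\big)=(n+1)\,\chi\big(\mathcal{L}^{k+1}(-pH)\big).
\end{equation*}
Actually the cleaner route, exactly as in Proposition \ref{dual M for surfaces}, is to write the two relations $\chi(\mathcal{L}^{k}(-H))=(n+1)\chi(\mathcal{L}^{k+1}(-H))$ and $\chi(\mathcal{L}^{k}(-2H))=(n+1)\chi(\mathcal{L}^{k+1}(-2H))$.

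Next I expand each $\chi$ by Riemann--Roch on the surface $X$: for a divisor $D$, $\chi(\mathcal{O}_X(D))=\tfrac12 D.(D-K_X)+\chi(\mathcal{O}_X)$. Plugging in $D=kL-H$, $(k+1)L-H$, $kL-2H$, $(k+1)L-2H$ and subtracting the two displayed relations, the $\chi(\mathcal{O}_X)$ terms and all purely-$L$ terms cancel, and I am left with a single linear relation among $L^2$, $L.H$, $H^2$ and $K_X.H$. Solving it for $K_X.H$ should produce precisely
\begin{equation*}
H.K_X=2(k+1)L.H-3H^2+2\frac{L.H}{n},
\end{equation*}
and then the adjunction formula $2g_H-2=H.(H+K_X)$ for a smooth curve $H\in|H|$ rewrites this as $g_H-1=(k+1)L.H-H^2+\tfrac{L.H}{n}$. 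Part ii) is entirely parallel: start from
\begin{equation*}
0\to M_{\mathcal{L},V}\otimes \mathcal{L}^{k-1}\to V\otimes \mathcal{L}^{k-1}\to \mathcal{L}^{k}\to 0,
\end{equation*}
twist by $-H$ and $-2H$, extract $\chi(\mathcal{L}^{k-1}(-pH))=(n+1)^{-1}\chi(\mathcal{L}^{k}(-pH))$ type relations (more precisely $(n+1)\chi(\mathcal{L}^{k-1}(-pH))=\chi(\mathcal{L}^{k}(-pH))$), subtract, and apply Riemann--Roch and adjunction to land on the stated formulas with $k+1$ replaced by $k-1$ and the sign of the $\tfrac{L.H}{n}$ and the $H^2$-coefficient bookkeeping adjusted as in the statement.

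The step I expect to require the most care is the bookkeeping in the cohomology long exact sequences: I must be sure that the Ulrich hypothesis really forces $\chi$ of the relevant twist of the outer term to be what I claim, and in particular that no stray cohomology of $\mathcal{L}^{k\pm1}(-pH)$ survives. For $p=1$ one can invoke Proposition \ref{proposition properties of Ulrich} and the exact sequence directly; for $p=2$ one uses the full strength of Definition \ref{definition of Ulrich} with $d=2$. Once the $\chi$-identities are in hand, the remainder is a mechanical Riemann--Roch computation on a surface together with adjunction, with no geometric subtlety — the cancellations are forced and the final formulas are just the solution of one linear equation. Note also that $n\neq 0$ is automatic since $V$ is an embedding subspace of a surface, so dividing by $n$ is legitimate.
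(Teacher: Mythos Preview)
Your approach is correct and essentially identical to the paper's: from the Ulrich vanishing and the twisted defining sequence obtain $\chi(\mathcal{L}^{k}(-pH))=(n+1)\,\chi(\mathcal{L}^{k+1}(-pH))$ for $p=1,2$, subtract the two relations, and apply Riemann--Roch on the surface (then adjunction for the $g_H$ reformulation). One small correction: the line $\chi(\mathcal{L}^{k+1}(-pH))=0$ is not what the exact sequence yields and is not equivalent to the $(n+1)$-relation you write next---but since you immediately pass to the correct relation, and since only additivity of $\chi$ on the short exact sequence is needed (no delicate control of individual $h^i$), the argument goes through without the cohomological subtleties you were worried about.
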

\begin{proof} i) Being $M_{\mathcal{L}, V}^{\vee}\otimes \mathcal{L}^{k+1}$ Ulrich, we have:

$$H^i(M_{\mathcal{L}, V}^{\vee}\otimes \mathcal{L}^{k+1}(-H))=H^i(M_{\mathcal{L}, V}^{\vee}\otimes \mathcal{L}^{k+1}(-2H))=0$$
for every $i\ge 0$.

Thus, from the exact sequence (\ref{DualMka}), we obtain:

$$h^i(\mathcal{L}^k(-H))=(n+1)h^i(\mathcal{L}^{k+1}(-H))$$

and

$$h^i(\mathcal{L}^k(-2H))=(n+1)h^i(\mathcal{L}^{k+1}(-2H))$$
for $i=0,1,2$.

Combining the previous relations for $H$ and $2H$ we obtain:

$$\chi(\mathcal{L}^k(-H))-\chi(\mathcal{L}^{k}(-2H))= (n+1)(\chi(\mathcal{L}^{k+1}(-H))-\chi(\mathcal{L}^{k+1}(-2H))).$$

The result follows by applying the Riemann-Roch theorem for surfaces.

ii) Quite analogous to part i).
\end{proof}

\begin{Corollary}\label{Corollary simultaneous} For any $k\in \mathbb{Z} $,  $M_{\mathcal{L}, V}^{\vee}\otimes \mathcal{L}^{k+1}$ and $M_{\mathcal{L}, V}\otimes \mathcal{L}^{k-1}$ can not be simultaneously $H-$Ulrich. \end{Corollary}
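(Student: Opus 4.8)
The plan is to read off a contradiction from the two numerical identities supplied by Proposition \ref{Proposition conditions on L and H}. Suppose, for contradiction, that for some $k\in\mathbb{Z}$ both $M_{\mathcal{L}, V}^{\vee}\otimes \mathcal{L}^{k+1}$ and $M_{\mathcal{L}, V}\otimes \mathcal{L}^{k-1}$ are $H$-Ulrich on the surface $X$. Applying part i) of that proposition to the first bundle and part ii) to the second gives two expressions for the same quantity $g_H-1$, namely
$$g_H-1=(k+1)L.H-H^2+\frac{L.H}{n}\qquad\text{and}\qquad g_H-1=(k-1)L.H-H^2-\frac{L.H}{n}.$$

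Next I would subtract these two equalities. The terms $g_H-1$ and $H^2$ cancel, the contributions of $k$ collapse to $(k+1)L.H-(k-1)L.H=2L.H$, and the two copies of $\pm L.H/n$ add, leaving
$$0=2L.H+\frac{2L.H}{n}=2\left(1+\frac{1}{n}\right)L.H.$$

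Finally I would note that this is impossible: $L$ and $H$ are very ample on the surface $X$, so (e.g.\ taking a general member of $|L|$, which is an effective curve, and using ampleness of $H$) one has $L.H>0$, while $1+\tfrac1n>0$. Hence no such $k$ exists and the two bundles can never be simultaneously $H$-Ulrich. There is no real obstacle here: the whole argument has already been carried out inside Proposition \ref{Proposition conditions on L and H}, and the only point requiring care is the bookkeeping of the signs of the $L.H/n$ terms, which are opposite in parts i) and ii) and therefore reinforce rather than cancel upon subtraction.
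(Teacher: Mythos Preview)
Your proof is correct and follows exactly the same route as the paper: equate the two expressions for $g_H-1$ from Proposition \ref{Proposition conditions on L and H} i) and ii), obtain $2(1+\tfrac{1}{n})L.H=0$, and conclude by positivity of $L.H$. The only difference is that you spell out why $L.H>0$, which the paper leaves implicit.
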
 

\begin{proof} Otherwise, by Proposition \ref{Proposition conditions on L and H} we must have:

$$(k+1)L.H-H^2+\frac{L.H}{n}= (k-1)L.H-H^2-\frac{L.H}{n},$$
or
$$2(1+\frac{1}{n})L.H=0.$$
\end{proof}

We now recall the concept of the dual Ulrich. Given $\mathcal{E}$ a locally free sheaf on a projective variety $X$ of dimension $d$, its dual Ulrich with respect to a very ample divisor $H$ is the sheaf $\mathcal{E}^\vee((d+1)H+K_X)$. If $\mathcal{E}$ is $H-$Ulrich, then its Ulrich dual with respect to $H$  also is. 

This can be used to construct examples of twisted syzygies bundles that are not Ulrich with respect to different polarizations. 

\begin{Corollary}\label{corollary 2kL}
    If $\mathcal{L}$ satisfies:

    $$\mathcal{L}^{2k}=\mathcal{O}_X(3H+K_X),$$
    then $M_{\mathcal{L}, V}^\vee\otimes \mathcal{L}^{k+1}$ is not $H-$Ulrich.
\end{Corollary}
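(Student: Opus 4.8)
The plan is to argue by contradiction using the dual Ulrich construction together with Corollary \ref{Corollary simultaneous}. Suppose $\mathcal{L}^{2k}=\mathcal{O}_X(3H+K_X)$ and suppose, for contradiction, that $\mathcal{E}:=M_{\mathcal{L}, V}^\vee\otimes \mathcal{L}^{k+1}$ is $H$-Ulrich. On a surface $d=2$, so the Ulrich dual of $\mathcal{E}$ with respect to $H$ is $\mathcal{E}^\vee(3H+K_X)$, and by the recalled property it is again $H$-Ulrich. The first step is therefore to compute $\mathcal{E}^\vee(3H+K_X)$ explicitly: since $\mathcal{E}^\vee=(M_{\mathcal{L},V}^\vee\otimes\mathcal{L}^{k+1})^\vee=M_{\mathcal{L},V}\otimes\mathcal{L}^{-(k+1)}$, we get
\begin{equation*}
\mathcal{E}^\vee(3H+K_X)=M_{\mathcal{L},V}\otimes\mathcal{L}^{-(k+1)}\otimes\mathcal{O}_X(3H+K_X).
\end{equation*}

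The second step is to use the hypothesis $\mathcal{O}_X(3H+K_X)=\mathcal{L}^{2k}$ to rewrite this twist as $M_{\mathcal{L},V}\otimes\mathcal{L}^{2k-(k+1)}=M_{\mathcal{L},V}\otimes\mathcal{L}^{k-1}$. Thus the Ulrich dual of $M_{\mathcal{L},V}^\vee\otimes\mathcal{L}^{k+1}$ is exactly $M_{\mathcal{L},V}\otimes\mathcal{L}^{k-1}$. So under our assumption both $M_{\mathcal{L},V}^\vee\otimes\mathcal{L}^{k+1}$ and $M_{\mathcal{L},V}\otimes\mathcal{L}^{k-1}$ would be $H$-Ulrich simultaneously, which is forbidden by Corollary \ref{Corollary simultaneous}. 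This contradiction proves the statement.

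I do not expect a genuine obstacle here; the only point requiring a little care is the bookkeeping in the duality twist — confirming that the paper's convention for the Ulrich dual in dimension $d=2$ gives the factor $(d+1)H+K_X=3H+K_X$, and that dualizing the rank-$n$ bundle $M_{\mathcal{L},V}^\vee\otimes\mathcal{L}^{k+1}$ indeed yields $M_{\mathcal{L},V}\otimes\mathcal{L}^{-(k+1)}$ (using $(M_{\mathcal{L},V}^\vee)^\vee\simeq M_{\mathcal{L},V}$, valid since $M_{\mathcal{L},V}$ is locally free). Once these identifications are in place, the argument is immediate. One could alternatively bypass the dual-Ulrich language entirely and instead combine the numerical identity in Proposition \ref{Proposition conditions on L and H}(i) with the relation $2kL.H=(3H+K_X).H$ coming from the hypothesis, deriving a contradiction arithmetically; but the dual-Ulrich route is cleaner and is presumably the intended one, since the concept was just introduced for precisely this purpose.
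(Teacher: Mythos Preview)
Your proof is correct and follows exactly the paper's own argument: compute the Ulrich dual of $M_{\mathcal{L},V}^\vee\otimes\mathcal{L}^{k+1}$ with respect to $H$, observe that the hypothesis $\mathcal{L}^{2k}=\mathcal{O}_X(3H+K_X)$ makes it equal to $M_{\mathcal{L},V}\otimes\mathcal{L}^{k-1}$, and invoke Corollary~\ref{Corollary simultaneous}. Your write-up is in fact more detailed than the paper's one-line proof, and your remarks about the bookkeeping and the alternative arithmetic route are accurate but unnecessary here.
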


\begin{proof}
    Under these conditions, the Ulrich dual of $M_{\mathcal{L}, V}^\vee\otimes \mathcal{L}^{k+1}$ with respect to $H$ coincides with $M_{\mathcal{L}, V}\otimes \mathcal{L}^{k-1}$ and the claim follows from Corollary \ref{Corollary simultaneous}
\end{proof}

The following examples deal with the case $X=\mathbb{P}^1\times \mathbb{P}^1$. If $C_1$, $C_2$ are generators of the two rulings, we write $\mathcal{O}(a,b)$ for the line bundle associated with the divisor $aC_1+bC_2$.

\begin{Example}
    If $\mathcal{L}=\mathcal{O}(1,4)$, $H$ the divisor of class $(2,6)$, then $M_{\mathcal{L}, V}^{\vee}\otimes \mathcal{L}^3$ is not $H-$Ulrich. It follows from Corollary \ref{corollary 2kL}.
\end{Example}

\begin{Example} Consider $\mathcal{L}= \mathcal{O}(1,1)$ and let $H$ be a very ample divisor, say of class $(a,b)$, $a,b>0$. Let $V=H^0(\mathcal{L})$. Then, for any $H$, $\mathcal{E}:= M_{\mathcal{L}, V}\otimes \mathcal{L}^{k-1}$ is not $H-$Ulrich.

Indeed, assume $\mathcal{E}$ is $H-$Ulrich. Using that $\mathcal{L}^2=2$, $\mathcal{L}.K_X=-4$, $\mathcal{L}.H=a+b$, $H.K_X=-2(a+b)$, and $H^2=2ab$, the conditions on Chern´s classes given in Proposition 5.1.1 of \cite{CostaMirog} become:

\begin{equation}\label{c_1}
   9ab =(3k-1)(a+b), \end{equation}

   \begin{equation}\label{c_2} 6ab =(3k+1)(k-1).
\end{equation}

In addition, we have the condition:

$$\chi (\mathcal{L}^k)=(n+1)\chi(\mathcal{L}^{k-1})= 4\chi(\mathcal{L}^{k-1});$$ that amount to:

$$(3k-1)(a+b)=3ab +3k^2-2k-4,$$
which is incompatible with the relations (\ref{c_1}) and (\ref{c_2}).

\end{Example}

\end{document}